\theoremstyle{plain}
\newtheorem{theorem}{Theorem}
\newtheorem{lemma}[theorem]{Lemma}
\newtheorem{proposition}[theorem]{Proposition}
\newtheorem*{theoremnn}{Theorem}
\newtheorem{corollary}[theorem]{Corollary}
\theoremstyle{definition}
\newtheorem{definition}{Definition}
\newtheorem{example}{Example}
\theoremstyle{remark}
\newtheorem{remark}{Remark}
\newcommand{\Si}{\mathfrak{S}}
\newcommand{\Ext}{{\mathrm{Ext}}}
\renewcommand{\hom}{{\mathrm{Hom}}}
\newcommand{\op}{\mathrm{op}}
\newcommand{\Fp}{\mathbb{F}_p}
\newcommand{\Fq}{\mathbb{F}_q}
\newcommand{\PP}{\mathcal{P}}
\newcommand{\V}{\mathcal{V}}
\newcommand{\gl}{\mathrm{gl}}
\newcommand{\Tw}{\mathrm{Tw}}
\newcommand{\DD}{\mathbf{D}}
\newcommand{\LL}{\mathbf{L}}
\newcommand{\RR}{\mathbf{R}}
\newcommand{\Gr}{\mathrm{Gr}\,}
\newcommand{\Ch}{\mathrm{Ch}}
\begin{document}

\title[A construction of the universal classes]{A construction of the universal classes for algebraic groups with the twisting spectral sequence}
\author{Antoine Touz\'e }


\maketitle

\begin{abstract}

In this article, we adapt some ideas developed by M. Cha{\l}upnik in \cite{Chal3} to the framework of strict polynomial bifunctors. This allows us to get a new proof of the existence of the `universal classes' originally constructed in \cite{TouzeUniv}.
\end{abstract}

\section{Introduction}

Let $\Bbbk$ be a field of prime characteristic $p$, and let $GL_{n,\Bbbk}$ denote the general linear group scheme over $\Bbbk$. In \cite{TouzeUniv}, we exhibited a set of `universal classes' $c[d]$, $d\in\mathbb{N}$, living in the cohomology of $GL_{n,\Bbbk}$. The cohomology we refer to here is the one introduced by Hochschild, which is also called rational cohomology, see \cite[I.4]{Jantzen}. These classes' existence was anticipated by van der Kallen \cite{VdKGross}, and they are one of the key ingredients to prove van der Kallen's conjecture, which is now a theorem:
\begin{theoremnn}[{\cite{TVdK}}]
Let $G$ be a reductive algebraic group scheme over a field $\Bbbk$, and let $A$ be an finitely generated $\Bbbk$-algebra acted on by $G$ via algebra automorphisms. Then the cohomology $H^*(G,A)$ is finitely generated as a graded $\Bbbk$-algebra.  
\end{theoremnn}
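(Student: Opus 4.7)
The plan is to deduce the theorem from a cohomological finiteness statement for the general linear group and then to exploit the universal classes $c[d]$, whose construction is the subject of the present paper.

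First I would reduce to the case $G = GL_{n,\Bbbk}$. Embed the reductive group $G$ into some $GL_{n,\Bbbk}$; by Matsushima's theorem the quotient $GL_n/G$ is affine, so Grosshans' construction of the induced algebra $B := (A \otimes \Bbbk[GL_n])^G$ is again a finitely generated $\Bbbk$-algebra, now carrying an action of $GL_n$. A Shapiro-type isomorphism $H^*(G,A) \cong H^*(GL_n, B)$ then reduces the statement to finite generation of $H^*(GL_n, B)$ for any finitely generated $GL_n$-algebra $B$.

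Next I would bring in the universal classes. For the Frobenius kernels $(GL_n)_r$, Friedlander--Suslin already proved that $H^*((GL_n)_r, B)$ is finitely generated; the difficulty is that their generators live in $\Ext$ groups between Frobenius twists and do not \emph{a priori} come from $GL_n$-cohomology. The universal classes $c[d]$ are designed exactly to provide such lifts: pairing with $c[d]$ sends $\Ext$ classes between strict polynomial functors to rational cohomology classes of $GL_n$, and after a sufficiently large Frobenius twist this operation hits the Friedlander--Suslin generators. Combined with a Noetherian module argument in the style of Evens' theorem on finite generation of cohomology of finite groups, this yields finite generation of $H^*(GL_n, B)$, and hence of $H^*(G,A)$.

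The main obstacle, I expect, is compatibility with the cup-product structure rather than merely with the module structure. Both the Grosshans reduction and the universal-class lifting must be performed multiplicatively, and the cup-product behaviour of the classes $c[d]$ (not just their individual existence) must be understood in sufficient detail, including control over how iterated Frobenius twists assemble into divided or symmetric power operations. It is precisely here that the structural construction of the $c[d]$ via the twisting spectral sequence in the category of strict polynomial bifunctors, as developed in the present paper, is expected to play an essential role, since it gives direct access to the multiplicative behaviour of the universal classes.
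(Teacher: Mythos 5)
The first thing to observe is that this paper does not prove the statement you were given: the theorem is quoted from \cite{TVdK} purely as motivation, and the paper's actual contribution is Theorem \ref{thm-univ}, the existence of the universal classes $c[d]$, which is only one ingredient of the cohomological finite generation (CFG) theorem. So there is no internal proof to compare yours against; what you have written is an outline of the external argument of \cite{VdKGross} and \cite{TVdK}.

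As such an outline, your first step is essentially correct: one reduces to $G=GL_{n,\Bbbk}$ via Grosshans induction $B=(A\otimes\Bbbk[GL_n])^G$, which is finitely generated because $G$ is reductive, and exactness of induction (affineness of $GL_n/G$) gives $H^*(G,A)\simeq H^*(GL_n,B)$. Your second step, however, mischaracterizes the role of the $c[d]$ and leaves the hard part unexecuted. The Friedlander--Suslin classes already \emph{are} $GL_n$-cohomology classes (they arise from $\Ext^*_{\mathcal{P}}(I^{(r)},I^{(r)})$ by evaluation), so the obstruction is not ``lifting them from Frobenius kernels to $GL_n$''. The obstruction identified by van der Kallen is that one needs \emph{divided-power} refinements of their cup powers: classes in $H^{2d}(GL_n,\Gamma^d(\mathfrak{gl}_n^{(1)}))$ restricting to $c[1]^{\cup d}$ along $\Gamma^d\hookrightarrow\otimes^d$ --- exactly conditions (1) and (2) of Theorem \ref{thm-univ}. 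These are what make the Evens/Friedlander--Suslin noetherianity argument run, after passing through the Grosshans filtration of $B$ and its good-filtration hull (into which $\mathrm{gr}\,B$ embeds with ``$p$-power nilpotent'' cokernel); that reduction machinery, which your sketch does not describe, is the real technical core of \cite{VdKGross} and \cite{TVdK}. Finally, your closing concern about needing the twisting-spectral-sequence construction to control the multiplicative behaviour of the $c[d]$ is misplaced for the CFG theorem itself: the CFG proof only consumes the existence statement of Theorem \ref{thm-univ}, and the construction in the present paper is an alternative route to that existence, not an input to \cite{TVdK}.
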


The purpose of this article is to give a new proof of the existence of the universal classes $(c[d])_{d\in\mathbb{N}}$. To be more specific, if $V$ is a finite dimensional vector space and $d\ge 1$, the vector space $V^{\otimes d}$ is acted on by the symmetric group $\Si_d$. We denote by $\Gamma^d(V)$ the `$d$-th divided power of $V$', that is the subspace of invariants $(V^{\otimes d})^{\Si_d}$. We also denote by $\mathfrak{gl}_n$ the adjoint representation of $GL_{n,\Bbbk}$ and by $\mathfrak{gl}_n^{(1)}$ the representation obtained by base change along the Frobenius morphism $\Bbbk\to\Bbbk, x\mapsto x^p$. We give in section \ref{sec-proof} a new proof of the following theorem, originally established in \cite[Thm 0.1]{TouzeUniv}.
\begin{theorem}\label{thm-univ}
Let $\Bbbk$ be a field of positive characteristic and let $n\ge p$ be an integer. There are cohomology classes $c[d]\in H^{2d}(GL_{n,\Bbbk},\Gamma^d(\mathfrak{gl}_n^{(1)}))$ such that~:
\begin{enumerate}
\item $c[1]\in H^2(GL_{n,\Bbbk},\mathfrak{gl}_n^{(1)})$ is non zero.
\item If $d\ge 1$ and $\Delta_{(1,\dots,1)} : \Gamma^d(\mathfrak{gl}_n^{(1)})\to (\mathfrak{gl}_n^{(1)})^{\otimes d}$ is the inclusion, then $\Delta_{(1,\dots,1)\,*}c[d]=c[1]^{\cup d}$.
\end{enumerate}
\end{theorem}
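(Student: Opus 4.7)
The plan is to lift the statement to the category $\PP_2$ of strict polynomial bifunctors, construct the classes there using an analogue of Cha{\l}upnik's twisting spectral sequence for $\PP_2$, and then transport them to rational $GL_n$-cohomology via the bifunctor-cohomology comparison theorem.

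First, let $\mathfrak{gl}\in\PP_2$ denote the strict polynomial bifunctor $(V,W)\mapsto \hom_\Bbbk(V,W)$, whose evaluation at $(\Bbbk^n,\Bbbk^n)$ recovers the adjoint representation $\mathfrak{gl}_n$; its divided power $\Gamma^d(\mathfrak{gl}^{(1)})$ is again a bifunctor. The bifunctor-cohomology comparison theorem (Franjou--Friedlander in low degrees, extended by the author) then provides a natural map
\[
\phi:\Ext^{2d}_{\PP_2}(\Gamma^d\mathfrak{gl},\,\Gamma^d\mathfrak{gl}^{(1)}) \longrightarrow H^{2d}(GL_{n,\Bbbk},\, \Gamma^d(\mathfrak{gl}_n^{(1)}))
\]
which is an isomorphism once $n$ is large enough. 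It therefore suffices to construct universal bifunctor classes $\tilde c[d]$ in the source Ext group satisfying the analogues of (1) and (2), and then set $c[d]:=\phi(\tilde c[d])$.

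Second, I would adapt Cha{\l}upnik's twisting construction to $\PP_2$. In $\PP$, the precomposition functor $F\mapsto F^{(1)}$ is exact, and \cite{Chal3} shows that its derived left adjoint is computed explicitly in terms of Koszul-type complexes concentrated in even degrees $0,2,\dots,2(p-1)$; this yields a spectral sequence computing $\Ext^*_\PP(F^{(1)},G^{(1)})$ from Ext groups of untwisted functors tensored with Koszul data. Extending the construction to $\PP_2$ by precomposing \emph{both} variables with Frobenius and deriving the resulting adjoint, one obtains a spectral sequence abutting to $\Ext^*_{\PP_2}(\Gamma^d\mathfrak{gl},\, \Gamma^d\mathfrak{gl}^{(1)})$. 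A canonical element supported in the top Koszul degree on the $E_2$-page is a permanent cycle and provides the class $\tilde c[d]$ in total degree $2d$.

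Third, I would verify the two properties. For (1), $\tilde c[1]$ pulls back to the standard generator of $\Ext^2_\PP(\Id,\Id^{(1)})$ from the Friedlander--Suslin computation, so $c[1]=\phi(\tilde c[1])$ is non zero. Property (2) should follow tautologically from the construction: on the bifunctor side, $\Delta_{(1,\dots,1)\,*}$ sends the top-degree Koszul class associated to $\Gamma^d$ to the $d$-fold tensor product of the degree-$1$ class, and naturality and multiplicativity of the twisting spectral sequence then yield the cup-product identity, which $\phi$ transports to rational cohomology. The main obstacle, I expect, is the second step: checking that Cha{\l}upnik's twisting machinery extends cleanly to $\PP_2$ with enough naturality and multiplicative structure to make the cup-product identity of (2) transparent; once that framework is in place, the extraction of the classes and the verification of (1) and (2) should be essentially formal.
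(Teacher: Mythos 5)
Your overall strategy coincides with the paper's: reduce to bifunctor cohomology via the Franjou--Friedlander evaluation map, and exploit the degeneration of a twisting spectral sequence for bifunctors (the bifunctor analogue of Cha{\l}upnik's adjunction argument is exactly what the paper develops to prove its collapsing theorem). The reduction step and the construction of $c[1]$ are fine, modulo small slips (the source object of bifunctor cohomology in bidegree $(dp,dp)$ is $\Gamma^{dp}\gl$, not $\Gamma^d\gl$, and the relevant one-dimensional group is $\Ext^2_{\PP_p}(I^{(1)},I^{(1)})$, not $\Ext^2(\Id,\Id^{(1)})$, which vanishes for degree reasons).

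The genuine gap is in your construction of $\tilde c[d]$ for $d\ge 2$ and the verification of property (2). First, your ``canonical element in the top Koszul degree is a permanent cycle'' is precisely the statement $E_2^{0,2d}=E_\infty^{0,2d}$, i.e.\ it already requires the full collapsing theorem (elements in the column $s=0$ of a first-quadrant spectral sequence can support differentials $d_k\colon E_k^{0,2d}\to E_k^{k,2d-k+1}$); you should say explicitly that you are invoking degeneration here. Second, and more seriously, even granting degeneration, a permanent cycle in $E_\infty^{0,2d}$ only gives an element of $\Gr^0 H^{2d}_\PP((\Gamma^d\gl)^{(1)})=H^{2d}/F^1H^{2d}$, so $\tilde c[d]$ is a choice of lift, and the ``multiplicativity and naturality'' you appeal to would at best give $\Delta_{(1,\dots,1)\,*}\tilde c[d]=\tilde c[1]^{\cup d}$ modulo the filtration on the target. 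Multiplicativity of the twisting spectral sequence is moreover never established (in the paper or in \cite{Chal3}) and is a nontrivial claim in its own right. The paper closes both gaps at once by a different, softer argument: it shows that $c[1]^{\cup d}$ is $\Si_d$-invariant (an easy consequence of cocommutativity of $\Gamma^*\gl$ and evenness of $c[1]$), and then proves that $\Delta_{(1,\dots,1)\,*}$ surjects onto $H^{2d}_\PP((\otimes^d\gl)^{(1)})^{\Si_d}$, so that $c[d]$ can be taken to be \emph{any} preimage. The surjectivity uses only the edge ($s=0$) column of the collapsed spectral sequence together with two observations you would also need: $\otimes^d\gl$ is injective (a direct summand of a standard injective), so $H^*_\PP((\otimes^d\gl)^{(1)})$ is concentrated in $\Gr^0$ and the filtration issue on the target disappears; and $H^0_\PP(-)$ commutes with taking $\Si_d$-invariants. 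Without the injectivity of $\otimes^d\gl$ (or some substitute), your argument cannot upgrade a congruence in the associated graded to the on-the-nose identity $\Delta_{(1,\dots,1)\,*}c[d]=c[1]^{\cup d}$ demanded by the theorem.
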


The old proof was essentially hand-made: in \cite{TouzeUniv}, we built the universal classes by computing explicit cycles, 
using explicit coresolutions of the representation $\Gamma^d(\mathfrak{gl}_n^{(1)})$. 
To achieve the construction, we introduced two main new combinatorial ingredients: 
the twist compatible category, constructed in \cite[Section 3]{TouzeUniv}, 
and a result on the combinatorics of tensor products of $p$-complexes \cite[Prop 2.4]{TouzeUniv}. 

The new proof uses some ideas developed in \cite{Chal3} to tackle the collapsing conjecture \cite[Conjecture 8.1]{TouzeTroesch}. 
It involves rather different ingredients, namely 
the flexibility of derived categories, the formality phenomenon of \cite[section 4]{TouzeTroesch}, 
the adjunction argument of \cite[section 2]{Chal3} and the twist injectivity \cite{CPS}, \cite[II.10.16]{Jantzen}. 

In the new proof as in the old proof, the natural framework is Franjou and Friedlander's category of strict polynomial bifunctors \cite{FF}. Strict polynomial bifunctors are the two variables generalization of Friedlander and Suslin's strict polynomial functors \cite{FS}, and they provide a natural way of building representations of $GL_{n,\Bbbk}$. An important example of strict polynomial bifunctor is $\gl$, which is an object of the category $\PP^1_1$ of bifunctors of bidegree $(1,1)$; its underlying ordinary bifunctor is just the $\hom$-bifunctor on finite dimensional $\Bbbk$-vector spaces, and its value on $(\Bbbk^n,\Bbbk^n)$ is the adjoint representation of $GL_{n,\Bbbk}$. The cohomology $H^*_{\PP}(B)$ of a bifunctor $B$ is the stable (i.e. for $n$ big enough) cohomology of $GL_{n,\Bbbk}$ with coefficients in the representation provided by $B$. The core of the new proof is the following theorem (whose statement is explained in detail in section \ref{sec-cohom-twisted}).
\begin{theorem}\label{thm-transparent}
Let $B\in \PP^d_d$ be a strict polynomial bifunctor. There is a filtration of the bifunctor cohomology $H^*_{\PP}(B^{(r)})$, natural with respect to $B$, and a natural graded isomorphism 
$$H^*_\PP(B_{E_r})\simeq \Gr H^*_\PP(B^{(r)})\;, $$
where `$\Gr$' refers to the graded vector space associated with the filtration, $E_r$ is an explicit graded vector space, and $B_{E_r}$ is an object of $\mathcal{P}^d_d$ obtained as a graded modification of the bifunctor $B$ by a parametrization by $E_r$.
\end{theorem}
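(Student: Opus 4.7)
The plan is to reformulate the bifunctor cohomology as an Ext group in the category of strict polynomial bifunctors, transfer the Frobenius twist via a Kuhn--Cha{\l}upnik style adjunction, and then exploit a formality phenomenon to produce the desired filtration and graded isomorphism.

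First, I would use that for a bifunctor $B\in\PP^d_d$, the stable cohomology $H^*_\PP(B)$ is corepresentable in $\PP^d_d$ by a natural object $T_d$ (the `trace' bifunctor used to compute rational cohomology stably); this realizes $H^*_\PP(B^{(r)})$ as $\Ext^*_{\PP^{dp^r}_{dp^r}}(T_d^{(r)}, B^{(r)})$, i.e.\ as an Ext group between two twisted objects in the category of bifunctors of bidegree $(dp^r,dp^r)$.

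Second, following the adjunction argument of Cha{\l}upnik \cite[section 2]{Chal3}, I would exhibit a right adjoint $\mathbf{K}_r \colon \PP^{dp^r}_{dp^r}\to\PP^d_d$ to the Frobenius twist functor $(-)^{(r)}$. Deriving this adjunction in the appropriate derived categories, one obtains an isomorphism
\[
\RR\hom_{\PP^{dp^r}_{dp^r}}\!\bigl(F^{(r)},B^{(r)}\bigr)\;\simeq\;\RR\hom_{\PP^d_d}\!\bigl(F,\RR\mathbf{K}_r(B^{(r)})\bigr)\;,
\]
which, applied to $F=T_d$, computes $H^*_\PP(B^{(r)})$ as the hypercohomology of the complex $\RR\mathbf{K}_r(B^{(r)})$ in $\PP^d_d$.

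Third, I would invoke the formality phenomenon of \cite[section 4]{TouzeTroesch}: because $B^{(r)}$ is a Frobenius twist, $\RR\mathbf{K}_r(B^{(r)})$ is formal in $\DD(\PP^d_d)$, and its cohomology is identified with the parametrized bifunctor $B_{E_r}$, where $E_r$ is the explicit graded vector space built from $\Ext^*_\PP(I^{(r)},I^{(r)})$ and parametrization is the graded modification appearing in the theorem. The standard hypercohomology spectral sequence associated to formality then has
\[
E_2^{p,q}\;=\;H^p_\PP\!\bigl(B_{E_r}\bigr)^q\quad\Rightarrow\quad H^{p+q}_\PP\!\bigl(B^{(r)}\bigr)\;,
\]
and its column filtration provides the filtration on $H^*_\PP(B^{(r)})$ claimed in the theorem.

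The main obstacle is the collapse at $E_2$: without it, one only gets an inequality of Poincar\'e series, not the graded isomorphism. This is exactly where twist injectivity \cite{CPS}, \cite[II.10.16]{Jantzen} enters. Injectivity of the Frobenius twist on rational cohomology, suitably promoted to the Ext groups involved (e.g.\ by parametrizing and tensoring along $E_r$), provides a lower bound on $H^*_\PP(B^{(r)})$ matching the upper bound furnished by the $E_2$-page. The two inequalities force collapse, hence the graded isomorphism, and naturality in $B$ follows at once from the naturality of the corepresentation, the adjunction, the formality quasi-isomorphism, and the spectral sequence.
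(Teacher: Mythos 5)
Your overall architecture (corepresentability, a derived adjunction with the Frobenius twist, formality, twist injectivity) matches the paper's up to a duality: the paper uses the \emph{left} adjoint $\ell$ of the twist and applies $\LL\ell$ to the corepresenting object $\Gamma^{pd}\gl$, whereas you propose the right adjoint $\mathbf{K}_r$ applied to the coefficient $B^{(r)}$; either variant is fine for the first step. The genuine gap is in your formality step. You assert that $\RR\mathbf{K}_r(B^{(r)})$ is formal with cohomology $B_{E_r}$ ``because $B^{(r)}$ is a Frobenius twist''. The formality results of \cite[Lemmas 4.4 and 4.5]{TouzeTroesch} do not give this: via Troesch's resolutions they produce a Hom-complex concentrated in even degrees only when \emph{both} arguments carry a twist, i.e. only after applying one further Frobenius twist to the complex. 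Accordingly the paper proves formality of $\LL\ell(\Gamma^{pd}\gl)^{(1)}$ (note the extra twist), not of $\LL\ell(\Gamma^{pd}\gl)$ itself; formality of the untwisted complex is essentially equivalent to the collapsing being proved, so asserting it outright is circular. This is also why the paper must first reduce to $r=1$ (lemma \ref{lm-red}): the formality input is only available one twist at a time, whereas you work with general $r$ throughout.

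Moreover, if your formality claim were available, the hypercohomology spectral sequence would degenerate at $E_2$ immediately and your final paragraph would be redundant; calling collapse ``the main obstacle'' after asserting formality signals the inconsistency. Twist injectivity is indeed the right ingredient, but your ``lower bound matching the upper bound'' deployment is too vague to close the argument. The precise mechanism (proposition \ref{prop-st2}) is: precomposition by $I^{(1)}$ induces a morphism of hypercohomology spectral sequences from the one you want to degenerate into the one attached to the additionally twisted complex $\LL\ell(\Gamma^{pd}\gl)^{(1)}$; twist injectivity (\cite{CPS}, promoted to bifunctor $\Ext$ groups via evaluation, lemmas \ref{lm-1} and \ref{lm-2}) makes this morphism injective on the second page; the target degenerates by the correctly twisted formality statement; and an induction on pages then forces all differentials of the source to vanish. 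Without constructing this intermediate spectral sequence and the injection into it, your argument does not go through.
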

The interest of theorem \ref{thm-transparent} lies in the fact that the left hand side of the isomorphism is easier to compute. In the case $B=\Gamma^d\gl$, this isomorphism gives access to the bifunctor cohomology of $\Gamma^d(\gl^{(1)})$. In particular, we can prove the existence of the universal classes living in the cohomology of $GL_{n,\Bbbk}$ with coefficients in $\Gamma^d(\mathfrak{gl}_n^{(1)})$. 

As a common point, the two proofs rely on the very fundamental complexes constructed by Troesch in \cite{Troesch} (see also \cite[Section 9]{TouzeTroesch} for a slightly different presentation of these complexes).

\subsection*{Acknowledgements}

I thank Wilberd van der Kallen for carefully reading a first version of this paper and for pointing out 
a mistake hidden behind an ambiguous notation in the former proof of theorem \ref{thm-collapse}. 
I also thank Marcin Cha{\l}upnik for many discussions on the preprint \cite{Chal3}, which is the source of inspiration for the proof of
theorem \ref{thm-transparent}. 

\section{Functors and bifunctors}\label{sec-fun-bifun}

Our proof of theorem \ref{thm-univ} uses the category of strict polynomial bifunctors introduced in \cite{FF}. 
In this section, we recall the main facts that we will need about this category. The reader might consult \cite{FS,FF,Krause,TouzeKos} for more details
on strict polynomial functors and bifunctors. Throughout this section, $\Bbbk$ is a field of positive characteristic $p$.

\subsection{Functors}
Let us first begin with brief recollections of the simpler category of strict polynomial functors introduced by Friedlander and Suslin in \cite{FS}. 
We denote by $\PP_d$ the abelian category of homogeneous strict polynomial functors of degree $d$ over $\Bbbk$. 
The objects of $\PP_d$ can be thought of as `nice' endofunctors of the category $\V_\Bbbk$ of finite dimensional $\Bbbk$-vector spaces, 
which naturally arise in representation theory of algebraic groups, and the morphisms of $\PP_d$ are some natural transformations between these functors 
(see \cite[Def 2.1]{FS} for an explicit definition). 
Objects of $\PP_d$ include:
\begin{enumerate}
\item the $d$-th tensor power $\otimes^d:V\mapsto V^{\otimes d}$,
\item the $d$-th symmetric power $S^d:V\mapsto S^d(V)$,
\item the $d$-th divided power $V\mapsto \Gamma^d(V)=(V^{\otimes^d})^{\Si_d}$.
\end{enumerate}
The $r$-th Frobenius twist $I^{(r)}\in\PP_{p^r}$ is the subfunctor of $S^{p^r}$ such that $I^{(r)}(V)$ is generated by the elements of the form $v^{p^r}\in S^{p^r}(V)$, 
for $v\in V$.
If $F\in\PP_d$, we denote by $F^{(r)}$ the composition $F\circ I^{(r)}$. This is an object of $\PP_{dp^r}$.

If $F\in\PP_d$, we denote by  $F^\sharp$ its dual. By definition $F^\sharp(V):= F(V^\vee)^\vee$, where the symbol `$^\vee$' stands for $\Bbbk$-linear duality, 
and $F^\sharp$ is an object of $\PP_d$. We have a canonical isomorphism
$$\hom_{\PP_d}(F,G)\simeq\hom_{\PP_d}(G^\sharp,F^\sharp)\;. $$

If $F\in\PP_d$, and $X\in\V_\Bbbk$ we define parametrized variants of $F$ as follows:
$$F^X: V\mapsto F(\hom_\Bbbk(X,V))\;,\quad F_X: V\mapsto F(X\otimes V)\;.$$ 
The parametrized functors $F^X$ and $F_X$ are objects of $\PP_d$. The notation $F^X$ reminds that $F^{X}(V)$ is contravariant with respect to $X$ 
(compare with the usual notation for functional spaces), while $F_{X}(V)$ is covariant with respect to $X$. 
The usual adjunction on both sides of $X\otimes-$ and $\hom(X,-)=X^\vee\otimes-$ on finite-dimensional vector spaces induce that the two parametrization functors are adjoint on both sides, i.e. there are natural isomorphisms:
$$\hom_{\PP_d}(F^X,G)\simeq \hom_{\PP_d}(F,G_X)\;,\qquad \hom_{\PP_d}(F_X,G)\simeq \hom_{\PP_d}(F,G^X)\;.$$

The family of functors $S^d_X:=(S^d)_X$, for $X\in\V_\Bbbk$ is an injective cogenerator of $\PP_d$ 
while the family of functors $\Gamma^{d,X}:=(\Gamma^d)^X= (S^d_X)^\sharp$, for $X\in\V_\Bbbk$, is a projective generator. We recall the isomorphisms, natural in $F,X$:
$$\hom_{\PP_d}(\Gamma^{d,X},F)\simeq F(X)\;,\quad \hom_{\PP_d}(F,S^d_X)\simeq F^\sharp(X)\;. $$
These isomorphisms are a form of the Yoneda lemma, see \cite{Krause,TouzeKos}, so we simply call them `the Yoneda isomorphisms'.

\begin{remark}\label{rk-Wilberd}
The notation $F_X$ for parametrized functors may come into conflict with the notation $F^{(r)}$ for precomposition by Frobenius twists. 
Indeed, Frobenius twists commute with tensor products, so we have $(F^{(r)})_X=(F_{X^{(r)}})^{(r)}$. Which is different from $(F_X)^{(r)}$. 
As pointed out by W. van der Kallen, we mustn't use the ambiguous notation $F_X^{(r)}$.
\end{remark}

\subsection{Bifunctors and bifunctor cohomology}
If $F\in\PP_d$, the vector space $F(\Bbbk^n)$ is canonically endowed with an action of the group scheme $GL_{n,\Bbbk}$, 
and Friedlander and Suslin proved \cite[Cor 3.13]{FS} that the evaluation map 
$$\Ext^*_{\PP_d}(F,G)\to \Ext^*_{GL_{n,\Bbbk}}(F(\Bbbk^n),G(\Bbbk^n))\simeq H^*\big(GL_{n,\Bbbk}, \hom_\Bbbk(F(\Bbbk^n),G(\Bbbk^n))\big)$$
is an isomorphism if $n\ge d$. (This allows to perform $\Ext$-computations in $\PP_d$, where computations are surprisingly easier).
Strict polynomial bifunctors were used in \cite{FF} to generalize this formula to more general $GL_{n,\Bbbk}$-representations 
than the ones of the somewhat restrictive form $\hom_\Bbbk(F(\Bbbk^n),G(\Bbbk^n))$. 
If $d,e\ge 0$, we denote by $\PP^d_e$ the abelian category of strict polynomial bifunctors 
which are homogeneous of bidegree $(d,e)$, contravariant with respect to their first variable and covariant with respect to their second variable.

Thus, objects of $\PP^d_e$ are bifunctors $B:(V,W)\mapsto B(V,W)$ taking a 
pair of finite dimensional vector spaces as arguments, with values in finite dimensional vector spaces. 
They satisfy the following condition: for all $W$ fixed, the functor $V\mapsto B(V,W)$ is a degree $d$ homogeneous 
\emph{contravariant} strict polynomial functor (or equivalently $V\mapsto B(V^\vee,W)$ is an object of $\PP_d$), and for all $V$ fixed,
the functor $W\mapsto B(V,W)$ is an object of $\PP_e$.
Examples of strict polynomial bifunctors can be constructed in the following ways.
\begin{enumerate}
\item If $F\in\PP_d$ and $G\in \PP_e$, we define a bifunctor $\hom_\Bbbk(F,G)\in \PP^d_e$ by the formula:
$$(V,W)\mapsto \hom_\Bbbk(F(V),G(W))\;.$$
\item If $F\in\PP_d$ we define a bifunctor $F\gl \in\PP^d_d$ by the formula: 
$$(V,W)\mapsto F(\hom_\Bbbk(V,W))\;.$$
\end{enumerate}

If $B$ is a strict polynomial bifunctor, the vector spaces $B(\Bbbk^n,\Bbbk^m)$ are canonically endowed with a left action of $GL_{m,\Bbbk}$
and a right action of $GL_{n,\Bbbk}$ which commute. By inverting matrices in $GL_{n,\Bbbk}$, we convert the right action into a left action. 
Taking furthermore $n=m$ and the diagonal action, we get an action of $GL_{n,\Bbbk}$ on $B(\Bbbk^n,\Bbbk^n)$. We have the following examples.
\begin{enumerate}
\item If $B=\hom_\Bbbk(F,G)$, the representation $\hom_\Bbbk(F(\Bbbk^n),G(\Bbbk^n))$ is the representation appearing as the cohomology 
coefficient in the target of Friedlander and Suslin's evaluation map above.
\item If $B=F\gl$, the representation $F\gl(\Bbbk^n,\Bbbk^n)$ is the representation $F(\mathfrak{gl}_n)$ obtained by evaluating $F$ 
on the adjoint representation $\mathfrak{gl}_n$ of $GL_{n,\Bbbk}$. 
\end{enumerate}
Franjou and Friedlander proved \cite[Thm 1.5]{FF} the following generalization of Friedlander and Suslin's isomorphism. For all $n\ge 1$ there is a natural map 
$$\Ext^*_{\PP^d_d}(\Gamma^d\gl,B)\to H^*(GL_{n,\Bbbk},B(\Bbbk^n,\Bbbk^n))\;, $$
and this map is an isomorphism if $n\ge d$. For this reason, the extensions on the left hand side are called the `bifunctor cohomology of $B$' 
and written under the more suggestive notation $H^*_\PP(B)$.

The isomorphism above shows that when we compute the cohomology of bifunctors, we are dealing with finite dimensional objects: 
$H_\PP^i(B)$ is finite dimensional and it is zero if $i$ is big enough (indeed, this property holds for $H^*(GL_{n,\Bbbk}, B(\Bbbk^n,\Bbbk^n))$ 
by \cite[II 4.7 and II 4.10]{Jantzen})
\begin{remark}
If $B\in\PP^d_e$ with $d\ne e$, then the cohomology of $GL_{n,\Bbbk}$ with coefficients in $B(\Bbbk^n,\Bbbk^n)$ is zero in all degrees. 
Indeed, the center of $GL_{n,\Bbbk}$, i.e. the multiplicative group $\mathbb{G}_m$ of homotheties  acts non trivially (with weight $e-d$) on this representation, 
whereas the representations of $GL_{n,\Bbbk}$ providing nontrivial cohomology must be acted on trivially by the center of $GL_{n,\Bbbk}$. Hence, 
the bifunctors of bidegree $(d,d)$ are the only interesting bifunctors for us. They are also the only ones for which $H^*_\PP(B)$ is defined.
\end{remark}

\subsection{Some useful properties of bifunctors}
We now recall a few technical but useful definitions and formulas holding in strict polynomial bifunctor categories. We will need them in the sequel of the paper.

\begin{enumerate}
 \item If $B\in \PP^d_e$, and $r\ge 1$, we denote by $B^{(r)}$ the object of $\PP^{dp^r}_{ep^r}$ defined by precomposing $B$ by the $r$-th Frobenius twist on both variables:
$$B^{(r)}(V,W):= B(V^{(r)}, W^{(r)}) \;.$$
Bifunctors of this form are called \emph{twisted bifunctors}. 
\item If $B$ is a strict polynomial bifunctor, we denote by $B^{\sharp}$ its dual. To be more specific $B^\sharp(V,W)=B(V^\vee,W^\vee)^\vee$, where `$^\vee$' stands for $\Bbbk$-linear duality. Duality commutes with precomposition by Frobenius twists, i.e. there is a canonical isomorphism $(B^{(r)})^\sharp\simeq (B^\sharp)^{(r)}$. 
Moreover, it yields an equivalence of categories:
$$^\sharp:(\PP^d_e)^\op\xrightarrow[]{\simeq} \PP^d_e\;.$$
\item Bifunctors of the form $\hom_\Bbbk(F,G)$ with $F,G\in\PP_d$ are called \emph{bifunctors of separable type}. For this kind of bifunctors, computations in the category of strict polynomial bifunctors usually boil down to computations in the category $\PP_d$ via the following natural isomorphisms \cite[Prop 1.3 and 2.2]{FF}:
\begin{align*}
\quad\qquad\Ext^*_{\PP^d_e}(\hom_{\Bbbk}(F,G),\hom_{\Bbbk}(F',G'))&\simeq \Ext^*_{\PP_d}(F',F)\otimes \Ext^*_{\PP_e}(G,G')\;,\\
H^*_\PP(\hom_{\Bbbk}(F,G)) &\simeq \Ext^*_{\PP_d}(F,G)\;.
\end{align*}
\item The \emph{standard injectives} $J_{d,X,e,Y}$,  defined for $X\in\V_\Bbbk$ and $Y\in\V_\Bbbk$ by
$$J_{d,X,e,Y}=\hom_\Bbbk(\Gamma^d_X,S^e_Y)$$
form an injective cogenerator in the category $\PP^d_e$. Similarly, a projective generator of $\PP^d_e$ is provided by the family 
of the \emph{standard projectives} $P^{d,X,e,Y}$, defined for $X\in\V_\Bbbk$ and $Y\in\V_\Bbbk$ by:
$$P^{d,X,e,Y}=\hom_\Bbbk(S^d_X,\Gamma^e_Y)$$
There are natural isomorphisms:
$$\quad\qquad \hom_{\PP^d_e}(P^{d,X,e,Y},B)\simeq B(X,Y)\;,\quad \hom_{\PP^d_e}(B,J_{d,X,e,Y})\simeq B^\sharp(X,Y)\;. $$
These two isomorphisms are just forms of the Yoneda lemma, so we simply call them `the Yoneda isomorphisms'. Finally, the functor $S^d_{\Bbbk^d}$ and $\Gamma^d_{\Bbbk^d}$ contain the functor $\otimes^d$ as a direct summand. Hence, the bifunctors $P^{d,\Bbbk^d,d,\Bbbk^d}$ and $J^{d,\Bbbk^d,d,\Bbbk^d}$ both contain the bifunctor $\otimes^d\gl\simeq \hom_\Bbbk(\otimes^d,\otimes^d)$ as a direct summand. In particular, the latter is projective and injective.
\end{enumerate}

\section{The cohomology of twisted bifunctors} \label{sec-cohom-twisted}

In this section, we study the cohomology of twisted bifunctors. It is an interesting subject in its own, especially
in view of the main theorem of \cite{CPSVdK}. This theorem says that the cohomology of a (connected, defined over $\Fp$ and split over $\Fp$) 
reductive group $G$ with coefficients 
in a twisted representation $M^{(r)}$ with ($r$ big enough) is a good approximation of the ordinary cohomology of the \emph{finite group} of $\Fq$-points ($q$ big enough) $G(\Fq)$ 
with coefficients in $M\otimes\Fq$. In the case of $GL_{n,\Fp}$, this means that the bifunctor cohomology $H^*_\PP(B^{(r)})$, with $r$ big enough, 
can be used to compute the cohomology of the finite group $GL_n(\Fq)$ with coefficients in the representation $B(\Fq^n,\Fq^n)$.

Of course, the cohomology of twisted bifunctors is also interesting for the universal classes, since these classes live in 
the cohomology of $GL_{n,\Bbbk}$ with coefficients in the representation given by the twisted bifunctor $\Gamma(\gl^{(1)})=(\Gamma\gl)^{(1)}$.

Our approach to the cohomology of twisted bifunctors follows the ideas of \cite{TouzeTroesch} and \cite{Chal3}. The main result of the section is theorem 
\ref{thm-collapse} (it is equivalent to theorem \ref{thm-transparent} of the introduction), which gives an explicit formula to compute the cohomology of twisted bifunctors from the cohomology of untwisted bifunctors.

\subsection{The twisting spectral sequence}

We first generalize the twisting spectral sequence of \cite[Thm 7.1]{TouzeTroesch} to the framework of strict polynomial bifunctors. The second page of this
twisting spectral sequence 
is given by the bifunctor cohomology of some parametrized modification of $B$, and it converges to the cohomology of $B^{(r)}$. 

To state it clearly we need to introduce a notation. If $B\in\PP_{d}^d$ and $Z\in\V_\Bbbk$, we denote by $B_Z\in\PP_{d}^d$ the parametrized bifunctor:
$$B_Z:(V,W)\mapsto B(V,Z\otimes W)\;.$$
If moreover $Z$ is graded, then the functor $B_Z$ inherits a grading, defined similarly as in \cite[Section 2.5]{TouzeTroesch}. 
To be more specific, let the multiplicative group $\mathbb{G}_m$ act on each $Z^i$ with weight $i$ (that is $\lambda\cdot z:=\lambda^iz$), and trivially (i.e. with weight zero) on $V$ and $W$. 
Then $B(V,Z\otimes W)$ inherits an action of $\mathbb{G}_m$. By definition, the elements of $B(V,Z\otimes W)=B_Z(V,W)$ of degree $j$ are the elements of weight $j$ 
under this action of $\mathbb{G}_m$. 
It is not hard to check from this definition that if $Z$ is zero in negative degrees (resp. odd degrees), then $B_Z$ is zero in negative degrees (resp. odd degrees). In the sequel, we shall denote by $B_Z^t$ the summand of $B_Z$ of degree $t$: $B_Z=\bigoplus B_Z^t$.

In the twisting spectral sequence, we will need bifunctors parametrized by the graded vector space $E_r$ of dimension $p^r$ defined by:
$$ (E_r)^{i}=\Bbbk, \text{ if $i=2j$ with $0\le j<p^r$ } \text{ and $(E_r)^i=0$ otherwise}\;.$$
The graded vector space $E_r$ is isomorphic to $\Ext^*_{\PP_{p^r}}(I^{(r)},I^{(r)})$. This was first computed by Friedlander and Suslin \cite{FS}, see also \cite[Cor 4.8]{TouzeTroesch}.

\begin{example}
Let $X^d$ denote one of the functors $S^d$, $\Lambda^d$ or $\Gamma^d$. The parametrized bifunctor $(X^d\gl)_{E_r}$ is equal to the direct sum
$$(X^d\gl)_{E_r}=\bigoplus X^{d_0}\gl\otimes\dots\otimes X^{d_{p^r-1}}\gl$$
taken over all $p^r$-tuples of nonnegative integers $(d_0,\dots,d_{p^r-1})$ satisfying $\sum d_i=d$. 
The summand  $X^{d_1}\gl\otimes\dots\otimes X^{d_{p^r}}\gl$ has degree $\sum 2i d_i$. 
\end{example}

The following twisting spectral sequence is a special case of hypercohomology spectral sequence, as all the spectral sequences that we will use in this paper. We use the conventions of \cite[Chap. 5]{Weibel} for our spectral sequences. Namely, the differentials of the $r$-th page are maps $d_r^{s,t}:E_r^{s,t}\to E_r^{s+r,t-r+1}$. The convergence $E_2^{s,t}\Rightarrow H^{s+t}$ means that there is a finite filtration:
$$ 0= F^{s+t+1}H^{s+t}\subset F^{s+t}H^{s+t}\subset \dots \subset F^1H^{s+t} \subset F^0H^{s+t}=H^{s+t}$$ 
such that $E_\infty^{s,t}$ is isomorphic to the graded piece $\Gr^s H^{s+t}=F^s H^{s+t}/ F^{s+1} H^{s+t}$. 
\begin{proposition}[The twisting spectral sequence]\label{prop-twss} Let $B\in\PP^d_d$. Then the parametrized functor $B_{E_r}$ is graded, and we denote its grading by the letter `$t$'.
There is a first quadrant cohomological spectral sequence, natural with respect to $B$: 
$$ E_2^{s,t}(B,r)= H^s_\PP(B_{E_r}^t)\Longrightarrow H^*_\PP(B^{(r)})\;,$$
\end{proposition}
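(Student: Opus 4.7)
The plan is to adapt the proof of the functor-level twisting spectral sequence \cite[Thm 7.1]{TouzeTroesch} to the bifunctor setting, using the separable-type identifications to bridge the two categories.

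\emph{Separable case.} First, I would verify the statement directly for bifunctors of separable type, $B=\hom_\Bbbk(F,G)$ with $F,G\in\PP_d$. Twisting commutes with the separable-type construction, so $B^{(r)}=\hom_\Bbbk(F^{(r)},G^{(r)})$, and parametrization does too, $B_{E_r}=\hom_\Bbbk(F,G_{E_r})$. By the separable-type formulas of Section~\ref{sec-fun-bifun}, the claim becomes
\[
E_2^{s,t}=\Ext^s_{\PP_d}(F,G_{E_r}^t)\Longrightarrow\Ext^{s+t}_{\PP_{dp^r}}(F^{(r)},G^{(r)}),
\]
which is precisely the functor-level twisting spectral sequence of \cite[Thm 7.1]{TouzeTroesch}.

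\emph{General case.} For a general $B\in\PP^d_d$, choose an injective coresolution $B\hookrightarrow J^\bullet$ by standard injectives $J^i=J_{d,X_i,d,Y_i}$, all of separable type. Since precomposition by a polynomial functor is exact on $\PP^d_d$, the complex $(J^\bullet)^{(r)}$ is still an exact coresolution of $B^{(r)}$, though its terms are no longer injective in $\PP^{dp^r}_{dp^r}$. Applying Troesch's functorial injective coresolutions of $(\Gamma^d_{X_i})^{(r)}$ and $(S^d_{Y_i})^{(r)}$ and assembling them via the separable-type construction yields, for each $i$, an injective coresolution of $(J^i)^{(r)}$ in $\PP^{dp^r}_{dp^r}$ carrying an internal grading. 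Stacking these coresolutions produces a double complex $\mathcal{T}^{\bullet,\bullet}$ whose totalization is a genuine injective coresolution of $B^{(r)}$ in $\PP^{dp^r}_{dp^r}$. Applying $H^*_\PP=\hom_{\PP^{dp^r}_{dp^r}}(\Gamma^{dp^r}\gl,-)$ and taking the hypercohomology spectral sequence filtered by the external (coresolution of $B$) direction, the $E_1$ page is computed by the Troesch cohomology of each $(J^i)^{(r)}$. By the separable case, this cohomology in internal degree $t$ equals $H^*_\PP((J^i)_{E_r}^t)$. Taking cohomology in the external direction then identifies $E_2^{s,t}$ with $H^s_\PP(B_{E_r}^t)$, using that parametrization by $E_r$ is exact in $B$ and hence sends the coresolution $J^\bullet$ to a coresolution of $B_{E_r}$.

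\emph{Main obstacle.} The key technical step is the functorial construction of the double complex $\mathcal{T}^{\bullet,\bullet}$: one must extend Troesch's complexes to the bifunctor category in a way that is natural in $B$ and that transports the $E_r$-grading cleanly through the separable-type isomorphism, so that the identification of the $E_1$ page holds as graded objects and not merely up to the abutment of an auxiliary spectral sequence. Once this is secured, naturality in $B$, convergence, and the first-quadrant property follow formally from the functoriality of the coresolutions and the finiteness results on bifunctor cohomology recalled in Section~\ref{sec-fun-bifun}.
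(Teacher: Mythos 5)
Your argument is correct in substance and rests on the same key input as the paper, namely the computation of twisted Ext-groups between standard projectives and injectives, $\Ext^*(P^{(r)},J^{(r)})\simeq\hom(P,J_{E_r}^*)$ (\cite[Thm 4.6]{TouzeTroesch} or \cite[Thm 4.3]{Chal1}), transported to bifunctors through the separable-type isomorphisms; but you assemble the bicomplex differently. The paper takes an injective resolution $K$ of $B$ in $\PP^d_d$ and a projective resolution $P$ of $\Gamma^{dp^r}\gl$, and works with the single bicomplex $\hom_{\PP^{dp^r}_{dp^r}}(P,K^{(r)})$: one of its two spectral sequences collapses and identifies the abutment as $H^*_\PP(B^{(r)})$, while the other has first page $H^t_\PP((K^s)^{(r)})\simeq H^0_\PP((K^s)_{E_r}^t)$ by the cited theorem applied degreewise, whence $E_2^{s,t}=H^s_\PP(B_{E_r}^t)$ because $K_{E_r}$ is an injective resolution of $B_{E_r}$. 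You instead manufacture a genuine injective resolution of $B^{(r)}$ as a double complex by further resolving each $(J^i)^{(r)}$ with Troesch complexes; this buys you an immediate identification of the abutment (no second spectral sequence needed), but it is exactly where your flagged ``main obstacle'' lives. That obstacle is avoidable: you do not need the Troesch resolutions of the $(J^i)^{(r)}$ to assemble functorially into a double complex, since a Cartan--Eilenberg resolution of the complex $(J^\bullet)^{(r)}$ serves the same purpose, and the identification of the $E_1$-page \emph{together with its differential} then only requires the isomorphism $H^*_\PP(J^{(r)})\simeq H^0_\PP(J_{E_r}^*)$ to be natural in the standard injective $J$ --- which is precisely the naturality the paper's proof invokes. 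Adopting the paper's bicomplex makes the issue disappear entirely, since no resolution of $B^{(r)}$ is ever constructed.
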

\begin{proof}
Let $P$ be a projective resolution of $\Gamma^{p^rd}\gl$, and let $K$ be an injective resolution of $B$. There are two spectral sequences associated to the bicomplex $\hom_{\PP^{p^rd}_{p^rd}}(P,K^{(r)})$, and converging to the homology of the total complex. The first one is  
$$E_2^{s,t}(B,r)= H^s(H^t_\PP(K^{(r)})) \Longrightarrow H^{s+t}(\hom_{\PP^{p^rd}_{p^rd}}(P,K^{(r)}))$$
This will be our twisting spectral sequence. To identify the abutment, we use the second spectral sequence associated to the bicomplex. Its second page is concentrated on one row, hence it collapses to a natural isomorphism:
$$H^{*}(\hom_{\PP^{p^rd}_{p^rd}}(P,K^{(r)}))\simeq H^{*}_\PP(B^{(r)})\;.$$
To identify the second page, we use the natural isomorphism \cite[Thm 4.6]{TouzeTroesch} or \cite[Thm 4.3]{Chal1} for all projective $P$ and all injective $J$:
$$\Ext^*(P^{(r)},J^{(r)})\simeq \hom(P,J_{E_r}^*)\;,$$
which can be rewritten in terms of bifunctors as
$$H^*_\PP(\hom_\Bbbk(P,J)^{(r)})\simeq H^0_\PP(\hom_\Bbbk(P,J)_{E_r}^*)\;.$$
Since the bifunctors of the form $\hom_\Bbbk(P,J)$ are the standard injective bifunctors, this means that we have an isomorphism of complexes:
$$H^t_\PP(K^{(r)})\simeq H^0_\PP(K_{E_r}^t)\;. $$ 
Now parametrization by $E_r$ (or by any graded vector space) is exact and preserves injective objects. 
Thus $K_{E_r}$ is an injective resolution of the graded bifunctor $B_{E_r}$, 
so the degree $s$ cohomology of the complex $H^t_\PP(K^{(r)})$ equals $H^s_\PP(B_{E_r}^t)$ as asserted.\qed
\end{proof}

The convergence of the twisting spectral sequence means that $E_\infty^{s,t}(B,r)$ is naturally isomorphic to $\Gr^s H^{s+t}_\PP(B^{(r)})$, where `$\Gr$' 
refers to a filtration of $H^*_\PP(B^{(r)})$ which is natural with respect to $B$. Since the graded objects are finite dimensional vector spaces, this implies that
 $E_\infty^{*,*}(B,r)$ (with total degree) 
is isomorphic (in a unnatural way) to $H^*_\PP(B^{(r)})$. 

The goal of section \ref{sec-cohom-twisted} is to prove 
that the twisting spectral sequence stops at the second page, i.e. that $E_2^{s,t}(B,r)$ equals $E_\infty^{s,t}(B,r)$.
In the framework of strict polynomial functors, a similar statement was conjectured and proved in many cases in \cite{TouzeTroesch}. 
The general case was announced by Cha{\l}upnik in \cite{Chal3}. 

\subsection{The collapsing theorem}\label{subsec-collapse}

We now state the main result of section \ref{sec-cohom-twisted}. It is equivalent to theorem \ref{thm-transparent} in the introduction.

\begin{theorem}\label{thm-collapse}
For all $B\in\PP^d_d$ and all $r\ge 1$, the twisting spectral sequence stops at the second page, i.e. $E_2^{s,t}(B,r)=E_\infty^{s,t}(B,r)$. 
Equivalently, there is an (a priori not natural) isomorphism of graded vector spaces (take the total degree on the left hand side)
$$ H^*_\PP(B_{E_r})\simeq H^*_\PP(B^{(r)})\;.$$
\end{theorem}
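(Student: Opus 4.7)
The plan is to produce, by derived-category techniques, a (non-natural) graded isomorphism $H^*_\PP(B^{(r)})\simeq H^*_\PP(B_{E_r})$, the right-hand side being equipped with the total internal degree. Once such an isomorphism is in hand, collapse of the twisting spectral sequence follows by a dimension count: convergence gives $\dim H^*_\PP(B^{(r)})=\dim E_\infty^{*,*}(B,r)\le \dim E_2^{*,*}(B,r)=\dim H^*_\PP(B_{E_r})$, and matching the outer terms forces every intermediate page, hence every higher differential, to be trivial. The absence of naturality in the resulting identification is consistent with the fact that only the associated graded of some filtration on $H^*_\PP(B^{(r)})$ is canonically $H^*_\PP(B_{E_r})$.

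The main tool is a derived adjunction inspired by Cha{\l}upnik's \cite[Section 2]{Chal3}. The formality identity used in the proof of Proposition~\ref{prop-twss}, namely
\[
\Ext^*_{\PP_{p^rd}}(P^{(r)},J^{(r)})\;\simeq\;\hom_{\PP_d}(P,J_{E_r}^*)
\]
for $P$ projective and $J$ injective (Theorem~4.6 of \cite{TouzeTroesch}), suggests that Frobenius precomposition $(-)^{(r)}\colon D^b(\PP^d_d)\to D^b(\PP^{p^rd}_{p^rd})$ admits an honest right adjoint $K_r$ valued in the derived category of graded bifunctors, with the decisive feature that on any injective $J\in\PP^d_d$ one has a \emph{formal} identification $K_r(J^{(r)})\simeq J_{E_r}$, i.e.~a complex whose differential is zero. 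Granted such an adjoint, the conclusion reduces to identifying $K_r(B^{(r)})\simeq B_{E_r}$ in the derived category of graded bifunctors; indeed, taking $\Ext^*_{\PP^d_d}(\Gamma^d\gl,-)$ of both sides would then yield the desired isomorphism $H^*_\PP(B^{(r)})\simeq H^*_\PP(B_{E_r})$.

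The identification $K_r(B^{(r)})\simeq B_{E_r}$ for general $B$ would be constructed by taking an injective resolution $B\to K$, applying the formal identification termwise on each $K^i$, and using that $E_r$-parametrization preserves both exactness and injectivity (as already noted in the proof of Proposition~\ref{prop-twss}). The decisive ingredient in making this coherent is twist injectivity \cite{CPS}, \cite[II.10.16]{Jantzen}, i.e.~the injectivity of the natural map $\Ext^*_{\PP^d_d}(F,G)\to\Ext^*_{\PP^{p^rd}_{p^rd}}(F^{(r)},G^{(r)})$. This rigidifies the floppy formality isomorphism into an actual derived adjunction by forcing chain-level choices to be uniquely determined by their values after twisting.

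The main obstacle I anticipate is precisely this rigidification. The termwise formality $\Ext^*(P^{(r)},J^{(r)})\simeq\hom(P,J_{E_r}^*)$ is a cohomological statement, and promoting it to a functorial chain-level derived adjunction requires eliminating the ambiguity of non-canonical chain lifts and checking that the gluing along an injective resolution of $B$ is well-defined in the derived category. This is exactly what Cha{\l}upnik's derived parametrization formalism handles in the strict polynomial functor case; transporting it to the bifunctor setting, and marrying it with twist injectivity to close the argument, is the technical heart of the proof.
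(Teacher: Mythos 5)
Your opening reduction (produce any ungraded isomorphism of the right total dimension, then force degeneration by counting dimensions) is sound and is exactly how the paper exploits finite-dimensionality (see Lemma \ref{lm-red}). The core of your argument, however, has a structural error: you set up the adjunction on the wrong side. Bifunctor cohomology of a twisted bifunctor is $H^*_\PP(B^{(r)})=\Ext^*_{\PP^{p^rd}_{p^rd}}(\Gamma^{p^rd}\gl,B^{(r)})$, whose \emph{source} $\Gamma^{p^rd}\gl$ is not a twisted bifunctor. A right adjoint $K_r$ to $\Tw$ therefore does not see this Ext group: what $\Ext^*_{\PP^d_d}(\Gamma^d\gl,K_r(B^{(r)}))$ computes is $\Ext^*((\Gamma^d\gl)^{(r)},B^{(r)})$, and $(\Gamma^d\gl)^{(r)}=\Gamma^d(\gl^{(r)})\neq\Gamma^{p^rd}\gl$. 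Moreover your key identification $K_r(B^{(r)})\simeq B_{E_r}$ is false for the one-sided parametrization $B_{E_r}$ appearing in the theorem. Take $B=\gl$, $d=1$, $r=1$: then $\Ext^*(\gl^{(1)},\gl^{(1)})\simeq\Ext^*_{\PP_p}(I^{(1)},I^{(1)})^{\otimes 2}$ is $p^2$-dimensional, while $\Ext^*(\gl,\gl_{E_1})\simeq E_1$ is $p$-dimensional; so $\hom(\gl,K_1(\gl^{(1)}))\neq\hom(\gl,\gl_{E_1})$. (The correct right adjoint for bifunctors would parametrize \emph{both} variables by $E_r$, which is not the object the theorem is about.) The paper avoids this by using the \emph{left} adjoint $\ell$ of $\Tw$, applied to the untwisted source: $\Ext^*(\Gamma^{pd}\gl,B^{(1)})\simeq H^*\RR\hom(\LL\ell(\Gamma^{pd}\gl),B)$, and the twisting spectral sequence is identified with the first hypercohomology spectral sequence of this complex (Proposition \ref{prop-st1}).

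Your use of twist injectivity is also not the workable mechanism. The formality supplied by Troesch's complexes only applies after one further Frobenius twist: the paper proves that $\LL\ell(\Gamma^{pd}\gl)^{(1)}$ is formal (Proposition \ref{prop-formal}), not $\LL\ell(\Gamma^{pd}\gl)$ itself, so the hypercohomology spectral sequence one can directly degenerate is the one computing $\RR\hom(\LL\ell(\Gamma^{pd}\gl)^{(1)},B^{(1)})$. Twist injectivity of Cline--Parshall--Scott enters not to ``rigidify chain-level choices'' (I do not see how uniqueness of lifts would follow from injectivity of a map on Ext groups), but to produce a morphism of spectral sequences, induced by $\Tw$, which is injective on the second page; degeneration of the target then forces degeneration of the source (Proposition \ref{prop-st2}). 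You would need to replace your rigidification step by this comparison-of-spectral-sequences argument, and reorient the adjunction as above, for the proof to go through.
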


This theorem generalizes the case of strict polynomial functors announced in \cite{Chal3}. Indeed, the case of bifunctors of the form $\hom_\Bbbk(F,G)$ implies the following result, which proves \cite[Conjecture 8.1]{TouzeTroesch}.
\begin{corollary}
Let $F,G\in\PP_d$. Then $G_{E_r}$ is a graded functor and there is an (a priori not natural) isomorphism of graded vector spaces
 (take the total degree on the left hand side)
$$\Ext^*_{\PP_d}(F,G_{E_r})\simeq \Ext^*_{\PP_{dp^r}}(F^{(r)},G^{(r)})\;.$$
\end{corollary}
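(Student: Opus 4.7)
The plan is to derive the corollary by specializing Theorem \ref{thm-collapse} to the bifunctor of separable type $B=\hom_\Bbbk(F,G)\in\PP^d_d$. First I would unpack what the parametrization and the Frobenius precomposition do to such a bifunctor. By definition, $B_{E_r}(V,W)=\hom_\Bbbk(F(V),G(E_r\otimes W))=\hom_\Bbbk(F,G_{E_r})(V,W)$, and since parametrization by the graded space $E_r$ is defined via the $\mathbb{G}_m$-action on $E_r$ acting on the $W$-variable only, the grading on $B_{E_r}$ coincides with the grading on $G_{E_r}$ (transported through $\hom_\Bbbk(F,-)$). In particular this shows that $G_{E_r}$ is indeed a graded object of $\PP_d$ and that
$$B_{E_r}\;=\;\hom_\Bbbk(F,G_{E_r})$$
as graded bifunctors. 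Similarly, since Frobenius precomposition commutes with $\hom_\Bbbk(-,-)$, one has
$$B^{(r)}(V,W)=\hom_\Bbbk(F(V^{(r)}),G(W^{(r)}))=\hom_\Bbbk(F^{(r)},G^{(r)})(V,W).$$

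Next I would apply the second separable-type isomorphism recalled in Section \ref{sec-fun-bifun}, namely $H^*_\PP(\hom_\Bbbk(F',G'))\simeq \Ext^*_{\PP_d}(F',G')$. Applied degreewise to $B_{E_r}$ (which is a direct sum of separable-type bifunctors indexed by the grading $t$) this yields
$$H^*_\PP(B_{E_r})\;\simeq\;\Ext^*_{\PP_d}(F,G_{E_r}),$$
with the total degree on the left hand side matching the sum of the internal $E_r$-grading and the $\Ext$-grading on the right. Applied to $B^{(r)}=\hom_\Bbbk(F^{(r)},G^{(r)})$ (both arguments now lying in $\PP_{dp^r}$) it yields
$$H^*_\PP(B^{(r)})\;\simeq\;\Ext^*_{\PP_{dp^r}}(F^{(r)},G^{(r)}).$$

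Finally I would feed these two identifications into the isomorphism provided by Theorem \ref{thm-collapse}, $H^*_\PP(B_{E_r})\simeq H^*_\PP(B^{(r)})$, to conclude. There is no serious obstacle beyond verifying that the grading conventions line up (so that `total degree' on the left of the corollary corresponds to the $(s+t)$-total degree on the left of Theorem \ref{thm-collapse}); once the identification $B_{E_r}=\hom_\Bbbk(F,G_{E_r})$ is made as \emph{graded} bifunctors, the rest is a direct substitution.
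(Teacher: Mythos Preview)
Your proposal is correct and follows exactly the approach indicated in the paper: specialize Theorem~\ref{thm-collapse} to the separable-type bifunctor $B=\hom_\Bbbk(F,G)$, identify $B_{E_r}=\hom_\Bbbk(F,G_{E_r})$ and $B^{(r)}=\hom_\Bbbk(F^{(r)},G^{(r)})$, and then use the isomorphism $H^*_\PP(\hom_\Bbbk(F',G'))\simeq \Ext^*_{\PP}(F',G')$ on both sides. The paper only gestures at this (``the case of bifunctors of the form $\hom_\Bbbk(F,G)$ implies the following result''), and you have correctly supplied the routine details, including the check that the gradings match.
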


The remainder of section \ref{sec-cohom-twisted} is devoted to the proof of theorem \ref{thm-collapse}. 
We first observe that it suffices to prove theorem \ref{thm-collapse} for the case $r=1$. Indeed we have the following analogue of \cite[Remark 8.2]{TouzeTroesch}.
\begin{lemma}\label{lm-red}
Assume that for all $d\ge 1$ and all $B\in\PP^d_d$ the twisting spectral sequence $(E^{*,*}_k(B,1))_{k\ge 2}$ stops at page $2$. 
Then for all $r\ge 2$ the twisting spectral sequence $(E^{*,*}_k(B,r))_{k\ge 2}$ also stops at page $2$. 
\end{lemma}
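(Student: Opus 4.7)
The plan is to induct on $r$, the base case $r=1$ being the standing hypothesis. I first observe that collapse of the twisting spectral sequence at page two is equivalent to the ungraded equality of total dimensions
$$\dim H^*_\PP(B_{E_r})=\sum_{s,t}\dim E_2^{s,t}(B,r)=\dim E_\infty^{*,*}(B,r)=\dim H^*_\PP(B^{(r)})\;,$$
because the successive differentials of a cohomological spectral sequence only decrease total dimensions, and $E_\infty^{*,*}(B,r)$ is the graded object associated with a finite filtration of $H^*_\PP(B^{(r)})$. Hence it suffices to produce, at each inductive step, an ungraded isomorphism of vector spaces between these two objects.

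For the inductive step, I factor $B^{(r)}=(B^{(r-1)})^{(1)}$ and apply the $r=1$ hypothesis to $B^{(r-1)}\in\PP^{dp^{r-1}}_{dp^{r-1}}$, obtaining an ungraded isomorphism
$$H^*_\PP(B^{(r)})\simeq H^*_\PP\bigl((B^{(r-1)})_{E_1}\bigr)\;.$$
The bifunctor analogue of the identity in remark \ref{rk-Wilberd} rewrites the right-hand parametrized-twisted bifunctor as $(B^{(r-1)})_{E_1}=(B_{\widetilde E_1})^{(r-1)}$, where $\widetilde E_1$ denotes $E_1$ with all internal degrees multiplied by $p^{r-1}$ (this rescaling accounts for the fact that the $(r-1)$-th Frobenius twist multiplies $\mathbb{G}_m$-weights by $p^{r-1}$). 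Since $B_{\widetilde E_1}$ is a graded bifunctor with each piece in $\PP^d_d$, the inductive hypothesis at level $r-1$ applies graded-piecewise and yields
$$H^*_\PP\bigl((B_{\widetilde E_1})^{(r-1)}\bigr)\simeq H^*_\PP\bigl((B_{\widetilde E_1})_{E_{r-1}}\bigr)=H^*_\PP\bigl(B_{\widetilde E_1\otimes E_{r-1}}\bigr)\;.$$

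To close the argument I would exhibit a graded vector space isomorphism $\widetilde E_1\otimes E_{r-1}\simeq E_r$. Both sides have total dimension $p^r$, and a basis of $\widetilde E_1\otimes E_{r-1}$ lives in degrees $2(ip^{r-1}+j)$ with $0\le i<p$ and $0\le j<p^{r-1}$; by uniqueness of the base-$p$ expansion of integers in $[0,p^r)$ these exhaust the set $\{0,2,\dots,2(p^r-1)\}$ exactly once, matching the graded structure of $E_r$. Composing these isomorphisms gives $\dim H^*_\PP(B^{(r)})=\dim H^*_\PP(B_{E_r})$, which by the first paragraph forces collapse at page two. The only step that requires real care, rather than a genuine obstacle, is this bookkeeping of internal gradings, in particular the verification that parametrization by a graded vector space interacts with Frobenius precomposition through exactly the expected rescaling.
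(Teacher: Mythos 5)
Your proof is correct and follows essentially the same route as the paper's: reduce collapse at page two to an equality of total (ungraded) dimensions, then induct on $r$ using the commutation $(B^{(a)})_Z\simeq(B_{Z^{(a)}})^{(a)}$ and the identification $E_1^{(r-1)}\otimes E_{r-1}\simeq E_r$. The only (immaterial) difference is that you peel off the twist as $B^{(r)}=(B^{(r-1)})^{(1)}$ while the paper writes $B^{(r+1)}=(B^{(1)})^{(r)}$, and you verify the graded form of $\widetilde E_1\otimes E_{r-1}\simeq E_r$ where the paper only needs the dimension count.
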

\begin{proof}
Since the graded vector spaces in play are finite dimensional, $(E^{*,*}_k(B,r))_{k\ge 2}$ stops at the second page if and only if 
$$\dim H^*_\PP(B^{(r)})=\dim H^*_\PP(B_{E_r})\;,$$
where $\dim$ refers to the total dimension, i.e. the dimension of the underlying ungraded objects.
Assume that we know such an equality for all $B$ and for a given $r$. Then the same equality holds for $r+1$. Indeed:
$$\dim H^*_\PP(B^{(r+1)})= \dim H^*_\PP((B^{(1)})^{(r)}) = \dim H^*_\PP((B^{(1)})_{E_r})\;.$$
But $(B^{(1)})_{E_r}$ is isomorphic to $(B_{E_r})^{(1)}$ in an ungraded way, because $(E_r)^{(1)}$ and $E_r$ have the same dimension. Hence
$$\dim H^*_\PP((B^{(1)})_{E_r})=\dim H^*_\PP((B_{E_r})^{(1)})=\dim H^*_\PP((B_{E_r})_{E_1})\;.$$
Now $\dim E_r\otimes E_1$ equals $\dim E_{r+1}$, hence $(B_{E_r})_{E_1}$ is isomorphic to $B_{E_{r+1}}$ in an ungraded way. So we finally obtain that
$\dim H^*_\PP(B^{(r+1)})$ equals $\dim H^*_\PP(B_{E_{r+1}})$. 
The proof of lemma \ref{lm-red} follows by induction on $r$. \qed
\end{proof}

So we concentrate on the case $r=1$ of theorem \ref{thm-collapse}. The plan of the proof is as follows. 

{\bf Step 1 (section \ref{subsec-adj}).} We show that the twisting spectral sequence is isomorphic to the first hypercohomology spectral sequence $'E^{*,*}_k$ computing 
the homology of the complex
$\RR \hom_{\PP_d^d}(\LL\ell(\Gamma^{pd}gl),B)$. 
For this, we use Cha{\l}upnik's adjunction argument \cite{Chal3} suitably adapted to the framework of bifunctors: the letter `$\ell$' appearing inside the $\RR\hom$ 
refers to the left adjoint of the precomposition by the Frobenius twist functor $I^{(1)}$. 

{\bf Step 2 (section \ref{subsec-twist-collapse}).} Precomposition by the Frobenius twist $I^{(1)}$ induces a 
morphism of spectral sequences
$\phi_k:{'E^{*,*}_k}\to {''E^{*,*}_k}$
where ${''E^{*,*}_k}$ is the first hypercohomology spectral sequence computing 
the homology of the complex
$\RR \hom_{\PP_{pd}^{pd}}(\LL\ell(\Gamma^{pd}gl)^{(1)},B^{(1)})$.
We prove that this morphism is an injection on the second page, using the Frobenius twist injectivity \cite[II.10.16]{Jantzen}. 
Thus, to prove that $'E^{*,*}_k$ stops at the second page, 
it suffices to prove that  $''E^{*,*}_k$ stops at the second page.

{\bf Step 3 (section \ref{subsec-formal}).} We prove that the complex $\LL\ell(\Gamma^{pd}gl)^{(1)}$ is formal, 
hence the hypercohomology spectral sequence ${''E^{*,*}_k}$ stops at the second page, and this finishes the proof of theorem \ref{thm-collapse}.

\begin{remark}
In the plan of the proof above as well as in the proof itself, we use standard concepts of derived categories like $\RR \hom$, $\LL\ell$. We do this to provide a more conceptual description of the complexes appearing in the proof. However, our proof actually holds at the level of chain complexes: we construct explicit chain complexes, and explicit zig-zags of chain maps between them. The reader can consult \cite[Chap. 10]{Weibel} for an introduction to derived categories.
\end{remark}

\subsection{Step1 : the adjunction argument}\label{subsec-adj}

In this paragraph, we adapt the adjunction argument of \cite{Chal3} to the category of strict polynomial bifunctors.
If $B\in\PP^d_d$, precomposition by the Frobenius twist functor $I^{(1)}$ on both variables induces an exact functor:
$$\begin{array}{cccc}
\Tw:&\PP^d_d&\to  &\PP^{pd}_{pd}\\
& B &\to & B^{(1)}   
  \end{array}
\;.$$
If $B\in \PP^{dp}_{dp}$, we introduce a bifunctor $\ell(B)\in \PP^d_d$. To be more specific, we define its \emph{dual} $\ell(B)^\sharp$ as the bifunctor:
$$(X,Y)\mapsto \hom_{\PP^{pd}_{pd}}(B,(J_{d,X,d,Y})^{(1)})\;. $$
Here $J_{d,X,d,Y}$ is the standard injective.
This yields a functor 
$$\ell:\PP^{dp}_{dp}\to \PP^d_d\;.$$

\begin{proposition}\label{prop-adjoint}
The functors $(\ell, \Tw)$ form an adjoint pair. 
\end{proposition}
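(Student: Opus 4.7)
The plan is to build a natural isomorphism
\[
\hom_{\PP^d_d}(\ell(B), C) \simeq \hom_{\PP^{pd}_{pd}}(B, C^{(1)})
\]
by first verifying it when $C$ is a standard injective, and then extending to arbitrary $C$ by the standard left-exactness trick available in any abelian category with an injective cogenerator. The defining formula for $\ell$ is rigged up precisely so that the base case becomes tautological.

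First I would treat the case $C = J_{d,X,d,Y}$. The second Yoneda isomorphism recalled in section \ref{sec-fun-bifun} gives a natural isomorphism
\[
\hom_{\PP^d_d}(\ell(B), J_{d,X,d,Y}) \simeq \ell(B)^\sharp(X,Y),
\]
and the right-hand side equals, by the very definition of $\ell$, the vector space $\hom_{\PP^{pd}_{pd}}(B, (J_{d,X,d,Y})^{(1)})$. This establishes the adjunction isomorphism on standard injectives, naturally in $B$, $X$, and $Y$.

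Then I would extend to an arbitrary $C\in\PP^d_d$. Since the family $(J_{d,X,d,Y})_{X,Y\in\V_\Bbbk}$ is an injective cogenerator of $\PP^d_d$, every $C$ fits into an exact sequence
\[
0 \to C \to J^0 \to J^1
\]
with $J^0, J^1$ products of standard injectives. Both functors $\hom_{\PP^d_d}(\ell(B),-)$ and $\hom_{\PP^{pd}_{pd}}(B,\Tw(-))$ are left-exact in $C$ (tautologically for the first; using exactness of $\Tw$, a precomposition functor, for the second) and both convert products in $C$ into products of vector spaces. The isomorphism of the previous step therefore extends to $J^0$ and $J^1$, is compatible with the differential $J^0 \to J^1$ by naturality of the Yoneda isomorphism, and functoriality of kernels then yields the desired isomorphism on $C$.

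The main obstacle is really just a bookkeeping matter: one must keep track of the naturality of the base-case isomorphism in $B$ (and in the parameters $X, Y$) so that the extension procedure produces a well-defined natural transformation rather than an \emph{ad hoc} collection of isomorphisms. Both naturalities follow directly from the naturality of the Yoneda lemma in all its arguments, and once these are in place, no non-trivial computation remains.
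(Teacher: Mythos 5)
Your proof is correct and follows essentially the same route as the paper: the adjunction is tautological on standard injectives by combining the Yoneda isomorphism $\hom_{\PP^d_d}(\ell(B),J_{d,X,d,Y})\simeq \ell(B)^\sharp(X,Y)$ with the defining formula for $\ell(B)^\sharp$, and the general case follows by left-exactness and (co)resolutions. The only cosmetic difference is that the paper's base case also specializes $B$ to a standard projective and resolves both variables, whereas you correctly observe that the base case already holds for arbitrary $B$, so only the second variable needs an injective copresentation.
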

\begin{proof}
It suffices to prove an isomorphism, natural in $B$, $B'$: 
$$\hom_{\PP^d_d}(\ell(B),B')\simeq\hom_{\PP^{pd}_{pd}}(B,(B')^{(1)}) \;,\quad (*)$$
when  $B=P^{pd,V,pd,W}$ and $B'= J_{d,X,d,Y}$ (the case of general $B$, $B'$ follows by taking resolutions). In this case,
the left hand side of formula $(*)$ is isomorphic to $\ell(B)^\sharp(X,Y)$ via the Yoneda isomorphism, and the latter equals the right hand side of formula $(*)$ by definition of $\ell(B)$.\qed
\end{proof}

We use the notations of \cite{Weibel} for categories of complexes and derived categories. Thus, $\Ch^+(\PP^{pd}_{pd})$ denotes the category of bounded below cochain complexes of bifunctors, i.e. cochain complexes $C$ with $C^i=0$ for $i\ll 0$. Similarly $\Ch^-(\PP^{pd}_{pd})$, denotes the category of bounded above cochain complexes of bifunctors. The corresponding derived (i.e. localized at quasi-isomorphisms) categories are respectively denoted by $\DD^+(\PP^{pd}_{pd})$ and $\DD^-(\PP^{pd}_{pd})$. The functor $\ell$ induces a functor 
$$\LL\ell: \DD^-(\PP^{pd}_{pd})\to \DD^-(\PP^{d}_{d})\;,$$
whose value on a bifunctor $B$ (considered as a complex concentrated in degree zero) is quasi-isomorphic to the complex $\ell(P)$, where $P$ is a projective resolution of $B$. Similarly, if $C$ is a bounded above cochain complex, there is a functor 
$$\RR\hom_{\PP^{d}_{d}}(C,-): \DD^+(\PP^{d}_{d})\to \DD^+(\PP^{d}_{d})\;,$$
whose value on a bifunctor $B$ is quasi-isomorphic to the (total) complex $\hom_{\PP^{d}_{d}}(C,K)$, where $K$ is an injective resolution of $B$.

Let $C$ be a complex of bifunctors with $C_i=0$ if $i<0$, let $B\in\PP^d_d$ be a bifunctor, and let $K$ be an injective resolution of $B$. 
There are two first quadrant cohomological
spectral sequences associated to the bicomplex $\hom_{\PP^d_d}(C,K)$, and converging to the homology of the total complex, that is, to the homology of $\RR\hom_{\PP^d_d}(C,B)$:  
\begin{align*}
{^{I}}E_2^{s,t}= \Ext^s_{\PP^d_d}(H_tC,B)&\Longrightarrow H^{s+t}\RR\hom_{\PP^d_d}(C,B)\;,\\
{^{II}}E_2^{s,t}= H^s(\Ext^t_{\PP^d_d}(C,B))&\Longrightarrow H^{s+t}\RR\hom_{\PP^d_d}(C,B)\;.
\end{align*}

\begin{definition}
We shall refer to the the spectral sequence $^IE^{*,*}_k$ as \emph{the first hyperhomology spectral sequence computing the homology of $\RR\hom_{\PP^d_d}(C,B)$}.  
\end{definition}

The first step of the proof of theorem \ref{thm-collapse} is the following statement.

\begin{proposition}[Step 1]\label{prop-st1}
Let $B\in\PP_d^d$.
The twisting spectral sequence $E^{*,*}_k(B,1)$ is isomorphic to the 
first hypercohomology spectral sequence computing the homology of $\RR\hom_{\PP^d_d}(\LL\ell(\Gamma^{pd}\gl),B)$.
\end{proposition}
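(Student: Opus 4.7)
The plan is to exhibit the two spectral sequences as the ``first'' spectral sequences of isomorphic bicomplexes, with the isomorphism supplied by the adjunction of Proposition \ref{prop-adjoint}.

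To set this up, I would fix a projective resolution $P \twoheadrightarrow \Gamma^{pd}\gl$ in $\PP^{pd}_{pd}$, so that the complex $\ell(P)$ represents $\LL\ell(\Gamma^{pd}\gl)$, together with an injective resolution $B \hookrightarrow K$ in $\PP^d_d$. Applying the adjunction $(\ell,\Tw)$ of Proposition \ref{prop-adjoint} in each bidegree, and using that it is natural in both variables, one obtains an isomorphism of bicomplexes
$$\hom_{\PP^{pd}_{pd}}(P,K^{(1)}) \;\simeq\; \hom_{\PP^d_d}(\ell(P),K)\;.$$

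By the construction of Proposition \ref{prop-twss}, the twisting spectral sequence $E^{*,*}_k(B,1)$ is the first spectral sequence of the left-hand bicomplex: the one obtained by first taking vertical cohomology in the direction of the resolution $P$ (which, since $P$ is projective, converts row-by-row into $\Ext^t_{\PP^{pd}_{pd}}(\Gamma^{pd}\gl,-) = H^t_\PP((-)^{(1)})$ applied to $K$). Under the bicomplex isomorphism, this transports to the spectral sequence of the right-hand bicomplex obtained by first taking vertical cohomology in the direction of $\ell(P)$. Since each $K^s$ is injective, the contravariant functor $\hom_{\PP^d_d}(-,K^s)$ is exact, so this vertical cohomology is
$$H^t\bigl(\hom_{\PP^d_d}(\ell(P),K^s)\bigr) \;\simeq\; \hom_{\PP^d_d}\bigl(L_t\ell(\Gamma^{pd}\gl),K^s\bigr)\;,$$
and the subsequent horizontal $H^s$-cohomology gives the $E_2$-page $\Ext^s_{\PP^d_d}(L_t\ell(\Gamma^{pd}\gl),B)$. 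This is precisely the first hypercohomology spectral sequence for $\RR\hom_{\PP^d_d}(\LL\ell(\Gamma^{pd}\gl),B)$. The total complexes, and hence the abutments, are identified by the same bicomplex isomorphism, and every step is natural in $B$.

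The only step that requires care is the bookkeeping: one must check that the ``first'' filtration used to define the twisting spectral sequence on the left-hand bicomplex matches, under the adjunction isomorphism, the ``first'' filtration used to define the first hypercohomology spectral sequence on the right-hand bicomplex. Both filtrations are by the cohomological degree of $P$ (equivalently of $\ell(P)$), so the correspondence is immediate once the bigradings are written down consistently. No input beyond Proposition \ref{prop-adjoint} and the exactness of $\hom_{\PP^d_d}(-,K^s)$ is needed, which is precisely why this step is the conceptual pivot of Cha{\l}upnik's strategy: it reduces a $\PP^{pd}_{pd}$-computation twisted by Frobenius to an underived $\PP^d_d$-computation against $\LL\ell(\Gamma^{pd}\gl)$.
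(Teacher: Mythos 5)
Your proof is correct and follows the same route as the paper: both identify the bicomplex $\hom_{\PP^{pd}_{pd}}(P,K^{(1)})$ underlying the twisting spectral sequence with $\hom_{\PP^d_d}(\ell(P),K)$ via the adjunction of Proposition \ref{prop-adjoint}, and then read off the first spectral sequences. The extra bookkeeping you supply (matching the filtrations and identifying the $E_2$-page as $\Ext^s_{\PP^d_d}(L_t\ell(\Gamma^{pd}\gl),B)$ using exactness of $\hom_{\PP^d_d}(-,K^s)$) is exactly what the paper leaves implicit.
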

\begin{proof}
Let $K^*$ be an injective resolution of $B$, and let $P_*$ be a projective resolution of $\Gamma^{pd}\gl$.
By definition (see the proof of proposition \ref{prop-twss}) the twisting spectral sequence is the first spectral sequence associated to the bicomplex 
$\hom_{\PP^{pd}_{pd}}(P,K^{(1)})$.
The latter is isomorphic to the bicomplex $\hom_{\PP^d_d}(\ell(P),K)$, which, by definition of the total derived functor $\LL\ell$, can be written as 
$\hom_{\PP^d_d}(\LL\ell(\Gamma^{pd}\gl),K^*)$. This concludes the proof. \qed
\end{proof}

\subsection{Step 2: twist injectivity}\label{subsec-twist-collapse}
Precomposition by the Frobenius twist $I^{(1)}$ is exact, so it induces a morphism (natural with respect to $B$, $B'$):
$$\Tw^*:\Ext^*_{\PP^d_d}(B,B')\to \Ext^*_{\PP^{dp}_{dp}}(B^{(1)},{B'}^{(1)})\;.$$
Precomposition by the Frobenius twist is the analogue of the following construction for $GL_{n,\Bbbk}\times GL_{m,\Bbbk}$-modules \cite[I.9.10]{Jantzen}. If $M$ is a $GL_{n,\Bbbk}\times GL_{m,\Bbbk}$-module, we denote by $M^{[1]}$ the $\Bbbk$-module $M$, equipped with the action of $GL_{n,\Bbbk}\times GL_{m,\Bbbk}$ where a pair of matrices $([a_{i,j}],[b_{k,\ell}])$ acts in $M^{[1]}$ as the pair $([a_{i,j}^{p}], [b_{k,\ell}^{p}])$ acts in $M$. The following result is straightforward from the definitions.
\begin{lemma}\label{lm-1}
There is a commutative diagram, where the horizontal arrows are induced by evaluating bifunctors on the pair $(\Bbbk^n,\Bbbk^m)$:
$$\xymatrix{
\Ext^*_{\PP^d_d}(B,B')\ar[d]\ar[r]& \Ext^*_{GL_{n,\Bbbk}\times GL_{m,\Bbbk}}(B(\Bbbk^n,\Bbbk^m),B'(\Bbbk^n,\Bbbk^m))\ar[d]\\
\Ext^*_{\PP^{dp}_{dp}}(B^{(1)},{B'}^{(1)})\ar[r] & \Ext^*_{GL_{n,\Bbbk}\times GL_{m,\Bbbk}}(B(\Bbbk^n,\Bbbk^m)^{[1]},B'(\Bbbk^n,\Bbbk^m)^{[1]})
}\;.$$ 
\end{lemma}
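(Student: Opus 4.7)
The plan is to reduce the lemma to one observation: the Frobenius twist functor $\Tw$ on strict polynomial bifunctors, when evaluated on the pair $(\Bbbk^n,\Bbbk^m)$, implements exactly the representation-theoretic twist $M\mapsto M^{[1]}$ of \cite[I.9.10]{Jantzen}. Once this is established, the commutativity of the diagram is essentially formal.

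First I would unpack the underlying identifications at the level of objects. The Frobenius twist $I^{(1)}$ on $\PP_p$ is constructed so that $V^{(1)}$ has the same underlying vector space as $V$, with its strict polynomial structure morphisms precomposed with the $p$-th power. Concretely, if $V=\Bbbk^n$ and $g=[a_{i,j}]\in GL_{n,\Bbbk}$, the action of $g$ on $V^{(1)}$ coincides with the action of the matrix $[a_{i,j}^p]$ on $V$. Applying this to both variables of a bifunctor $B\in\PP^d_d$, one obtains a canonical equality of underlying vector spaces $B^{(1)}(\Bbbk^n,\Bbbk^m)=B(\Bbbk^n,\Bbbk^m)$ which, as a $GL_{n,\Bbbk}\times GL_{m,\Bbbk}$-module, is precisely $B(\Bbbk^n,\Bbbk^m)^{[1]}$. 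A parallel statement holds for morphisms, so $\Tw$ intertwines the evaluation functors.

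Next I would lift this to Ext groups. The horizontal arrows are built by choosing a projective resolution $P_\bullet\to B$ in $\PP^d_d$, applying the evaluation functor to get a complex of $GL_{n,\Bbbk}\times GL_{m,\Bbbk}$-modules, then comparing with a projective resolution over the group scheme; this is how the Franjou--Friedlander map is defined. Since $\Tw$ is exact, $\Tw(P_\bullet)\to B^{(1)}$ is a resolution in $\PP^{dp}_{dp}$. By the identification in the previous paragraph, this evaluates to the complex obtained by applying the twist $(-)^{[1]}$ to the evaluation of $P_\bullet$. Both routes around the square thus assign, to an extension class represented by a cocycle in $\hom_{\PP^d_d}(P_\bullet,B')$, the same cocycle in $\hom_{GL_{n,\Bbbk}\times GL_{m,\Bbbk}}(\mathrm{ev}(P_\bullet)^{[1]},B'(\Bbbk^n,\Bbbk^m)^{[1]})$.

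There is no real obstacle here; the only point that deserves attention is the compatibility of $\Tw$ with the evaluation functor, which is a direct consequence of the definition of the Frobenius twist as a reparametrization of the strict polynomial structure. Once this identification is in place, commutativity of the diagram at chain level, and hence at the level of Ext groups, follows by naturality.
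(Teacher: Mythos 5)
Your argument is correct and is exactly what the paper means when it declares the lemma ``straightforward from the definitions'': the whole content is the canonical identification $B^{(1)}(\Bbbk^n,\Bbbk^m)=B(\Bbbk^n,\Bbbk^m)^{[1]}$ of $GL_{n,\Bbbk}\times GL_{m,\Bbbk}$-modules, valid because $\Bbbk^n$ and $\Bbbk^m$ carry canonical $\Fp$-structures, after which commutativity follows from the naturality of the evaluation map under the exact functors $\Tw$ and $(-)^{[1]}$. One cosmetic point: rational $GL_{n,\Bbbk}\times GL_{m,\Bbbk}$-modules do not have enough projectives, so the comparison of $\Ext$ groups should be phrased via Yoneda extension classes or injective resolutions rather than a ``projective resolution over the group scheme'', but this does not affect the substance of your argument.
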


The following lemma is a slight generalization of \cite[Cor 3.13]{FS}. It can be also obtained as a corollary of \cite[Thm 4.5]{TouzeClassical}.

\begin{lemma}\label{lm-2}
Let $B,B'\in\PP^{d}_d$. If $n\ge d$ and $m\ge d$, the evaluation functor induces an isomorphism:
$$\Ext^*_{\PP^d_d}(B,B')\simeq \Ext^*_{GL_{n,\Bbbk}\times GL_{m,\Bbbk}}(B(\Bbbk^n,\Bbbk^m),B'(\Bbbk^n,\Bbbk^m))\;.$$ 
\end{lemma}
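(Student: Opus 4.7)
The plan is to adapt the Friedlander--Suslin argument for \cite[Cor 3.13]{FS} to the bifunctor setting, reducing to the separable case via standard resolutions. First I would choose a projective resolution $Q_\bullet\to B$ by finite direct sums of standard projectives $P^{d,X,d,Y}=\hom_\Bbbk(S^d_X,\Gamma^d_Y)$, and an injective resolution $B'\to J^\bullet$ by finite direct sums of standard injectives $J_{d,X',d,Y'}=\hom_\Bbbk(\Gamma^d_{X'},S^d_{Y'})$. The two $\Ext^*$ groups in the statement should then both arise as the total cohomology of the bicomplex $\hom(Q_\bullet,J^\bullet)$, before and after evaluation at $(\Bbbk^n,\Bbbk^m)$.

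The main technical input, and the hardest step, is to verify that evaluation at $(\Bbbk^n,\Bbbk^m)$ is exact on $\PP^d_d$ for $n,m\ge d$ and that it sends standard projectives (resp.\ standard injectives) to projective (resp.\ injective) $GL_{n,\Bbbk}\times GL_{m,\Bbbk}$-modules. For this I would pass through the intermediate category of polynomial representations of $GL_{n,\Bbbk}\times GL_{m,\Bbbk}$ of bidegree $(d,d)$ (contragredient in the first factor, standard in the second): when $n,m\ge d$, evaluation identifies $\PP^d_d$ with this intermediate category, which in turn is a direct summand of the category of all rational representations of $GL_{n,\Bbbk}\times GL_{m,\Bbbk}$. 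Both facts are obtained by applying the one-variable statements underlying \cite[Cor 3.13]{FS} (the Schur-algebra description of $\PP_d$ together with the splitting of polynomial representations off the rational ones) factor by factor, using that a polynomial representation of $GL_{n,\Bbbk}\times GL_{m,\Bbbk}$ of a fixed bidegree decomposes as an external tensor product of polynomial representations of the two factors.

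Once these inputs are in place, the statement reduces to the case where $B=\hom_\Bbbk(F,G)$ and $B'=\hom_\Bbbk(F',G')$ with $F,F',G,G'\in\PP_d$. On the bifunctor side the separable-type formula recalled in section \ref{sec-fun-bifun} gives
$$\hom_{\PP^d_d}(\hom_\Bbbk(F,G),\hom_\Bbbk(F',G'))\simeq\hom_{\PP_d}(F',F)\otimes\hom_{\PP_d}(G,G').$$
On the group side, rewriting $\hom_\Bbbk(A,B)$ as $A^\vee\otimes B$ turns the representation into a tensor product on which $GL_{n,\Bbbk}$ and $GL_{m,\Bbbk}$ act on disjoint tensor slots, so the K\"unneth formula for external tensor products splits the $\hom_{GL_{n,\Bbbk}\times GL_{m,\Bbbk}}$ as a tensor product of $\hom_{GL_{n,\Bbbk}}$ and $\hom_{GL_{m,\Bbbk}}$. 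Friedlander and Suslin's \cite[Cor 3.13]{FS} applied separately to each factor then identifies the two resulting tensor factors, which yields the desired isomorphism. All identifications being natural in the parameters $X,Y,X',Y'$, they are compatible with the differentials of the bicomplex, so passing to total cohomology concludes the proof.
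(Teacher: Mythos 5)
Your proof follows essentially the same route as the paper's: establish the isomorphism for separable-type bifunctors by combining the K\"unneth decomposition with Friedlander--Suslin's \cite[Cor 3.13]{FS} applied to each tensor factor, and then globalize via the standard projectives and injectives (the paper invokes a standard $\delta$-functor argument where you take explicit resolutions and a bicomplex, but the two devices are interchangeable and need the same acyclicity input, which the separable case supplies). One caveat on your ``hardest step'': evaluated standard projectives are not projective in the category of all rational $GL_{n,\Bbbk}\times GL_{m,\Bbbk}$-modules (that category has no nonzero projectives), and the polynomial representations of a fixed bidegree are not literally a direct summand of it; what you actually need, and what suffices, is the Donkin-type fact that this subcategory is closed under extensions and that its inclusion induces isomorphisms on all Ext groups --- the same input already underlying \cite[Cor 3.13]{FS}.
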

\begin{proof}
This holds true when $B$ and $B'$ are of separable type, i.e. of the form $\hom_\Bbbk(F,G)$ and $\hom_\Bbbk(F',G')$. Indeed, we have a commutative diagram, where the upper horizontal arrow is an isomorphism by Friedlander and Suslin's result \cite[Cor 3.13]{FS}.
$$ \text{\small $
\xymatrix{
\Ext^*_{\PP_d}(F^\sharp,{F'}^\sharp)\otimes \Ext^*_{P_d}(G,G')\ar[r]\ar[d]^{\otimes}_\simeq& \Ext^*_{GL_n}(F^\sharp(\Bbbk^n),{F'}^\sharp(\Bbbk^n))\otimes \Ext^*_{GL_m}(G(\Bbbk^n),G'(\Bbbk^n))\ar[d]^{\otimes}_\simeq\\
\Ext^*_{\PP_d^d}(\hom_\Bbbk(F,G),\hom_\Bbbk(F',G'))\ar[r]& \Ext^*_{GL_n\times GL_m}(\hom_\Bbbk(F(\Bbbk^n),G(\Bbbk^m)),\hom_\Bbbk(F'(\Bbbk^n),G'(\Bbbk^m)))
&&}
$}
$$
In particular, the lemma is valid when $B$ is a standard projective and $B'$ is a standard injective. The general result now follows by a standard $\delta$-functor argument. \qed
\end{proof}

We can now state the twist injectivity for bifunctors.

\begin{proposition}\label{prop-tw-inj}
Let $B,B'\in\PP^d_d$. Precomposition by $I^{(1)}$ yields an injective map:
$$\Ext^*_{\PP^d_d}(B,B')\hookrightarrow \Ext^*_{\PP^{dp}_{dp}}(B^{(1)},{B'}^{(1)})\;.$$
\end{proposition}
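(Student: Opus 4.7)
The plan is to deduce the injectivity of the bifunctor-level twist map from the known twist injectivity for rational cohomology of the reductive group $GL_{n,\Bbbk}\times GL_{m,\Bbbk}$, by transporting the statement through the evaluation isomorphism of Lemma \ref{lm-2}.

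More precisely, I would fix integers $n\ge d$ and $m\ge d$ and consider the commutative square of Lemma \ref{lm-1}. By Lemma \ref{lm-2}, the top horizontal arrow (evaluation on $(\Bbbk^n,\Bbbk^m)$) is an isomorphism. Moreover, the group scheme $GL_{n,\Bbbk}\times GL_{m,\Bbbk}$ is reductive, so the twist map
\[
\Ext^*_{GL_{n,\Bbbk}\times GL_{m,\Bbbk}}(M,N)\to \Ext^*_{GL_{n,\Bbbk}\times GL_{m,\Bbbk}}(M^{[1]},N^{[1]})
\]
is injective by \cite[II.10.16]{Jantzen} (applied to this reductive group, taking $M=B(\Bbbk^n,\Bbbk^m)$ and $N=B'(\Bbbk^n,\Bbbk^m)$). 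Hence the right vertical arrow of the square is injective.

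Combining these two facts, the composite (right)$\circ$(top) from $\Ext^*_{\PP^d_d}(B,B')$ to the lower-right corner is injective. By commutativity of the square, this composite equals (bottom)$\circ$(left), and consequently the left vertical arrow $\Tw^*$ must itself be injective, which is exactly the statement of the proposition.

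There is no real obstacle here once one has Lemma \ref{lm-1}, Lemma \ref{lm-2}, and Jantzen's twist injectivity at hand; the only small point to verify is that Jantzen's result applies to the product group, but this is immediate since $GL_{n,\Bbbk}\times GL_{m,\Bbbk}$ is reductive (alternatively one could invoke twist injectivity for $GL_{n,\Bbbk}$ and for $GL_{m,\Bbbk}$ separately and combine them via the Künneth/Lyndon--Hochschild--Serre spectral sequence, but this is overkill). Note that we do not need the bottom horizontal arrow to be an isomorphism — it could even fail to be for the chosen $n,m$, which would only force $n,m\ge dp$ — because only injectivity on the left is needed, and that follows purely from the injectivity of the composition along the top-right path.
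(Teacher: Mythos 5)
Your argument is correct and is essentially the paper's own proof: both reduce the statement via the commutative square of Lemma \ref{lm-1} and the evaluation isomorphism of Lemma \ref{lm-2} (for $n,m\ge d$) to the Cline--Parshall--Scott twist injectivity for the reductive group $GL_{n,\Bbbk}\times GL_{m,\Bbbk}$. Your added remark that the bottom arrow of the square need not be an isomorphism is a correct (and worthwhile) clarification, but it does not change the substance of the argument.
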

\begin{proof}
In view of lemmas \ref{lm-1} and \ref{lm-2}, it suffices to prove the analogous statement for the corresponding $GL_{n,\Bbbk}\times GL_{m,\Bbbk}$-modules. This statement is a theorem of Cline Parshall and Scott \cite{CPS}, see also \cite[II.10.16]{Jantzen}. \qed
\end{proof}

Let $K$ denote an injective resolution of $B'$, and $L$ an injective resolution of ${B'}^{(1)}$. The identity lifts to a morphism of complexes $K^{(1)}\to L$. The morphism $\Tw^*$ can be described at the cochain level as the composite
$$\hom_{\PP^d_d}(B,K)\to \hom_{\PP^{pd}_{pd}}(B^{(1)},K^{(1)})\to \hom_{\PP^{pd}_{pd}}(B^{(1)},L)\;.$$
In particular, by replacing $B$ by the complex $\LL\ell(\Gamma^{dp}\gl)$, we get a morphism of bicomplexes
$$\hom_{\PP^d_d}(\LL\ell(\Gamma^{dp}\gl),K)\to \hom_{\PP^d_d}(\LL\ell(\Gamma^{dp}\gl)^{(1)},L)\;.$$
This morphism of bicomplexes induces a morphism between the associated hypercohomology spectral sequences, which is nothing but the morphism
$$\Tw^s: \Ext^s(\,H_t(\LL\ell(\Gamma^{pd}\gl))\,,B)\to \Ext^s(\,H_t(\LL\ell(\Gamma^{pd}\gl)^{(1)})\,, B^{(1)})\;.$$
Hence, from proposition \ref{prop-tw-inj}, we get the following statement, which completes the second step of the proof of theorem \ref{thm-collapse}.
\begin{proposition}[Step 2]\label{prop-st2}
Let us denote by $'E_k^{*,*}$, resp. $''E_k^{*,*}$, the first hypercohomology spectral sequence computing $\RR\hom(\LL\ell(\Gamma^{pd}\gl),B)$, resp. $\RR\hom(\LL\ell(\Gamma^{pd}\gl)^{(1)},B^{(1)})$.
Precomposition by the Frobenius twist induces a morphism of spectral sequences:
$$'E_k^{*,*}\to ''E_k^{*,*}\;.$$
Moreover, this morphism is injective at page two.
\end{proposition}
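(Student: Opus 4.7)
The plan is to assemble the morphism of spectral sequences from the bicomplex-level description supplied in the paragraph preceding the statement, and then identify the induced map on $E_2$ with the injection provided by Proposition \ref{prop-tw-inj}. Concretely, I would fix an injective resolution $K$ of $B$ and an injective resolution $L$ of $B^{(1)}$, together with any chain lift $K^{(1)}\to L$ of the identity (which exists because $K^{(1)}$ is quasi-isomorphic to $B^{(1)}$ and $L$ is a complex of injectives). Applying $\hom_{\PP^d_d}(\LL\ell(\Gamma^{pd}\gl),-)$ on the source side and $\hom_{\PP^{pd}_{pd}}(\LL\ell(\Gamma^{pd}\gl)^{(1)},-)$ on the target side, combined with the functoriality of $\Tw$, yields an honest morphism of bicomplexes
$$\hom_{\PP^d_d}(\LL\ell(\Gamma^{pd}\gl),K)\longrightarrow \hom_{\PP^{pd}_{pd}}(\LL\ell(\Gamma^{pd}\gl)^{(1)},L).$$
Passing to the first hypercohomology spectral sequences of the two bicomplexes produces the desired map $'E_k^{*,*}\to {''}E_k^{*,*}$ (independent, up to isomorphism from page two onward, of the chain-level choices).

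Next I would compute this morphism on $E_2$. By the definition of the first hypercohomology spectral sequence recalled just before the proposition, the $E_2^{s,t}$-terms on the two sides are respectively
$$\Ext^s_{\PP^d_d}(H_t(\LL\ell(\Gamma^{pd}\gl)),B)\quad\text{and}\quad \Ext^s_{\PP^{pd}_{pd}}(H_t(\LL\ell(\Gamma^{pd}\gl)^{(1)}),B^{(1)}),$$
and the induced map is exactly the one obtained by applying $\Tw$ to both arguments. Since the Frobenius twist functor $\Tw$ is exact, it commutes with the homology of any complex; in particular there is a canonical identification $H_t(\LL\ell(\Gamma^{pd}\gl)^{(1)})\simeq H_t(\LL\ell(\Gamma^{pd}\gl))^{(1)}$. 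Under this identification, the $E_2$-page map coincides with the map $\Tw^{*}$ of Proposition \ref{prop-tw-inj} applied to the pair $(H_t(\LL\ell(\Gamma^{pd}\gl)),B)$ of objects of $\PP^d_d$.

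Injectivity then follows immediately from Proposition \ref{prop-tw-inj}, which precisely asserts that $\Tw^*$ is an injection on $\Ext$-groups between any two objects of $\PP^d_d$. The only point that needs to be checked with care is the identification $H_t(\LL\ell(\Gamma^{pd}\gl)^{(1)})\simeq H_t(\LL\ell(\Gamma^{pd}\gl))^{(1)}$, i.e.\ that $\Tw$ commutes with $\LL\ell$ in the relevant sense; but since $\Tw$ is term-wise exact it preserves quasi-isomorphisms between complexes of projectives, and this reduces to the tautology that $\Tw$ commutes with taking cohomology of a fixed complex. Everything else in the proof is a direct combination of the bicomplex construction of $\Tw^*$ recorded just before the statement and of Proposition \ref{prop-tw-inj}.
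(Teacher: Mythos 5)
Your proposal is correct and follows essentially the same route as the paper: construct the morphism of bicomplexes from a chain lift $K^{(1)}\to L$ and the functoriality of $\Tw$, identify the induced map on the second pages with $\Tw^*$ on the $\Ext$-groups $\Ext^s(H_t(\LL\ell(\Gamma^{pd}\gl)),B)$ via the exactness of $\Tw$, and conclude by Proposition \ref{prop-tw-inj}. The only difference is that you make explicit the identification $H_t(\LL\ell(\Gamma^{pd}\gl)^{(1)})\simeq H_t(\LL\ell(\Gamma^{pd}\gl))^{(1)}$, which the paper leaves implicit.
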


\subsection{Step 3: formality}\label{subsec-formal}
The following formality statement is a consequence of the key lemmas \cite[Lemma 4.4, Lemma 4.5]{TouzeTroesch}.
\begin{proposition}\label{prop-formal}
The complex $\LL\ell(\Gamma^{dp}\gl)^{(1)}$ is formal: there is an isomorphism in the derived category $\DD^-(\PP^{pd}_{pd})$ (the object on the right hand side is concentrated in even degrees, with zero differential):
$$\LL\ell(\Gamma^{dp}\gl)^{(1)}\simeq ((\Gamma^{d}\gl)_{E_r})^{(1)}$$
\end{proposition}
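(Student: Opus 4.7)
The plan is to represent $\LL\ell(\Gamma^{pd}\gl)$ by an explicit complex in $\PP^d_d$ built from a Troesch-style $p$-complex resolution of $\Gamma^{pd}\gl$ in $\PP^{pd}_{pd}$, and then to invoke the key formality lemmas \cite[Lemma 4.4, Lemma 4.5]{TouzeTroesch} to show that the Frobenius twist of this explicit complex is formal with the prescribed cohomology.

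First, I would construct a projective resolution $P_\bullet\to\Gamma^{pd}\gl$ in $\PP^{pd}_{pd}$ adapted to the $p$-complex structure used by Troesch. A natural way to do this is to start from Troesch's injective coresolution of $S^{pd,(1)}$ in $\PP_{pd}$ (which consists of tensor products of standard injectives and has the $p$-complex structure), and transfer it to the bifunctor category via dualization together with the $\gl$-parametrization $F\mapsto F\gl$, using the identity $(\Gamma^{pd}\gl)^\sharp \simeq S^{pd}\gl$. This produces a resolution of $\Gamma^{pd}\gl$ whose terms are tensor products of bifunctors directly related to the standard projectives $P^{pd,X,pd,Y}$. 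By definition, $\LL\ell(\Gamma^{pd}\gl)$ is represented by the complex $\ell(P_\bullet)$.

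Second, I would apply $\ell$ term by term. On a standard projective, the formula derived in the proof of Proposition~\ref{prop-adjoint} describes $\ell(P^{pd,X,pd,Y})$ through the Yoneda isomorphism as the dual of $(X',Y')\mapsto J_{d,X',d,Y'}^{(1)}(X,Y)$. Extended to the tensor products appearing in our resolution and then Frobenius-twisted, the complex $\ell(P_\bullet)^{(1)}$ takes the form of a tensor product of Troesch $p$-complexes, now living in $\PP^{pd}_{pd}$, to which the formality lemmas can be applied.

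Third, I would invoke \cite[Lemma 4.4, Lemma 4.5]{TouzeTroesch}. Lemma 4.4 establishes the formality of a single Troesch $p$-complex after Frobenius twist, and Lemma 4.5 shows this formality is compatible with tensor products and parametrizations. Together they yield a zig-zag of quasi-isomorphisms from $\ell(P_\bullet)^{(1)}$ to its cohomology, viewed as a complex with zero differential concentrated in even degrees. Using the explicit description of $(\Gamma^d\gl)_{E_1}$ from the Example preceding Proposition~\ref{prop-twss} as the direct sum of tensor products $\bigotimes_j \Gamma^{d_j}\gl$ indexed by compositions of $d$ into $p$ parts, with gradings coming from the weights of $E_1$, one identifies the cohomology precisely as $((\Gamma^d\gl)_{E_1})^{(1)}$.

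The main obstacle is the careful transfer of Troesch's machinery from the strict polynomial functor category to the bifunctor category: one must check that the $\gl$-parametrization interacts correctly with the $p$-complex differentials, that $\ell$ on the relevant standard projectives yields precisely the twisted piece needed to match the functor-level formality lemmas, and that the combinatorics of the weight grading on $E_1$ matches the bidegree bookkeeping in $\PP^{pd}_{pd}$. The underlying algebraic inputs --- the formality of $I^{(1)}$ and its compatibility with tensor products --- are insensitive to the abelian category of coefficients, so the extension should ultimately be a matter of careful verification rather than new ideas.
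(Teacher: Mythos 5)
There is a genuine gap, and it lies in where the Troesch complex enters the argument. Troesch's complex is an injective coresolution of a \emph{twisted} symmetric power ($S^{d(1)}$, or more generally $S^{d(r)}$); its dual is therefore a projective resolution of $\Gamma^{d(1)}$, and no amount of dualizing and $\gl$-parametrizing turns it into a projective resolution of the \emph{untwisted} bifunctor $\Gamma^{pd}\gl$ (note also the degree mismatch: $S^{pd(1)}$ lives in $\PP_{p^2d}$, not in $\PP_{pd}$). So your first step does not produce the resolution $P_\bullet$ needed to represent $\LL\ell(\Gamma^{pd}\gl)$. Relatedly, you misstate what \cite[Lemmas 4.4, 4.5]{TouzeTroesch} say: they do not assert that a Troesch $p$-complex is formal after twist (being a coresolution, it is exact away from degree $0$, so its own formality is not the issue); they assert that the hom-complex $\hom_{\PP_{dp}}(G^{(1)},T)$ \emph{from a twisted functor into} the Troesch resolution $T$ is concentrated in even degrees and is isomorphic to $\hom_{\PP_{dp}}(G,S^d_{E_1})$. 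The Frobenius twist on the source of that hom-complex is the engine of the whole formality phenomenon, and your plan gives it no role.

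The actual proof fixes both problems by never trying to make $\ell(P_\bullet)$ explicit. It computes the dual instead: by the defining formula $\ell(B)^\sharp(X,Y)=\hom_{\PP^{pd}_{pd}}(B,(J_{d,X,d,Y})^{(1)})$, an \emph{arbitrary} projective resolution $P$ of $\Gamma^{pd}\gl$ gives $\LL\ell(\Gamma^{pd}\gl)^{\sharp}(X^{(1)},Y^{(1)})=\hom_{\PP^{pd}_{pd}}(P,(J_{d,X^{(1)},d,Y^{(1)}})^{(1)})$, a complex computing the bifunctor cohomology of a twisted separable injective. Since $(J_{d,X^{(1)},d,Y^{(1)}})^{(1)}=\hom_\Bbbk((\Gamma^{d(1)})_X,(S^{d(1)})_Y)$ is of separable type, this reduces (after a zig-zag of quasi-isomorphisms and the parametrization adjunction) to the complex $\hom_{\PP_{dp}}(((\Gamma^{d})^{\hom_\Bbbk(X^{(1)},Y^{(1)})})^{(1)},T)$, where $T$ is the Troesch resolution of $S^{d(1)}$: the source carries the twist, the Troesch complex sits in the target, and Lemmas 4.4 and 4.5 apply verbatim to give concentration in even degrees together with the identification with $S^d(E_1\otimes\hom_\Bbbk(X^{(1)},Y^{(1)}))$, i.e.\ the dual of $((\Gamma^d\gl)_{E_1})^{(1)}$. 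Your final identification of the answer via the Example is correct, but the mechanism producing it has to be rebuilt along these lines.
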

\begin{proof}
It suffices to prove the dual statement, i.e. that the complexes
$$(\LL\ell(\Gamma^{dp}\gl)^{(1)})^\sharp = (\LL\ell(\Gamma^{dp}\gl)^\sharp)^{(1)}\;,\text{ and } ((S^{d}\gl)_{E_r})^{(1)}$$
are isomorphic in $\DD^+(\PP^{pd}_{pd})$. To prove this, we are going to write an explicit zigzag in $\Ch^+(\PP^{pd}_{pd})$ between these two complexes (the chain maps in this zigzag will be quasi-isomorphisms). We will denote by $(X,Y)$ the variables of the bifunctors in play.  

{\bf Step 1.} Let $P$ be a projective resolution of $\Gamma^{pd}\gl$. By definition, we have
$$\LL\ell(\Gamma^{dp}\gl)^{\sharp}(X^{(1)},Y^{(1)})=\hom_{\PP^{dp}_{dp}}(P, (J_{d,X^{(1)},d, Y^{(1)}})^{(1)})\;.$$
There is an equality (as already observed in remark \ref{rk-Wilberd}, the twist on $X$ and $Y$ disappears on the right hand side)
$$(J_{d,X^{(1)},d, Y^{(1)}})^{(1)}=\hom_\Bbbk((\Gamma^{d(1)})_X, (S^{d(1)})_Y)\;.$$ 
Let $T$, resp. $Q$ denote an injective resolution of $S^{d(1)}$, resp. a projective resolution of $\Gamma^{d(1)}$. Then $T_Y$ is an injective resolution of $(S^{d(1)})_Y$, and $Q_X$ is a projective resolution of $(\Gamma^{d(1)})_X$. The morphisms $P_0\twoheadrightarrow \Gamma^{pd}\gl$, $S^{d(1)}\hookrightarrow T^0$ and $Q_0\twoheadrightarrow \Gamma^{d(1)}$ induce chain maps (take total complexes everywhere):
\begin{align*}
&\hom_{\PP^{pd}_{pd}}(P, \hom_\Bbbk((\Gamma^{d(1)})_X, (S^{d(1)})_Y))\hookrightarrow \hom_{\PP^{pd}_{pd}}(P, \hom_\Bbbk(Q_X, T_Y))\;,\\
&\hom_{\PP^{pd}_{pd}}(\Gamma^{dp}\gl, \hom_\Bbbk(Q_X, T_Y)) \hookrightarrow \hom_{\PP^{pd}_{pd}}(P, \hom_\Bbbk(Q_X, T_Y))\;.
\end{align*}
These two chain maps are quasi-isomorphisms. 

{\bf Step 2.} Since the complex $\hom_\Bbbk(Q_X, T_Y)$ involves bifunctors of separable type, we have an isomorphism of complexes:
$$\hom_{\PP^{pd}_{pd}}(\Gamma^{dp}\gl, \hom_\Bbbk(Q_X, T_Y)) \simeq \hom_{\PP_{dp}}(Q_X, T_Y)\;.$$
Moreover, the morphism $Q_0\twoheadrightarrow \Gamma^{d(1)}$ induces a chain map:
$$\hom_{\PP_{dp}}((\Gamma^{d(1)})_X, T_Y) \to \hom_{\PP_{dp}}(Q_X, T_Y)\;,$$
which is a quasi-isomorphism. Finally, adjointness of upper and lower parametrizations yields an isomorphism of complexes
$$\hom_{\PP_{dp}}((\Gamma^{d(1)})_X, T_Y)\simeq \hom_{\PP_{dp}}((\Gamma^{d(1)})_X^Y, T)\;.$$ 
Observe that, unlike the notation $F^{(r)}_X$ alluded to in remark \ref{rk-Wilberd}, the notation $F_X^Y$ is unambiguous since the functors $X\otimes-$ and $\hom(Y,-)$ commute up to a canonical isomorphism. Hence we have canonical isomorphisms $(F_X)^Y\simeq (F^Y)_X\simeq F^{\hom_\Bbbk(X,Y)}$.

{\bf Step 3.} Now we use the key lemmas of \cite{TouzeTroesch}. We can assume that the injective resolution $T$ is the Troesch resolution of $S^{d(1)}$. 
We have an equality:
$$(\Gamma^{d(1)})_X^Y=((\Gamma^d)_{X^{(1)}}^{Y^{(1)}})^{(1)}=((\Gamma^{d})^{\hom_\Bbbk(X^{(1)},Y^{(1)})})^{(1)}\;.$$ 
By \cite[Lemma 4.4]{TouzeTroesch}, the complex  
$$\hom_{\PP_{dp}}(((\Gamma^{d})^{\hom_\Bbbk(X^{(1)},Y^{(1)})})^{(1)}, T)$$
is zero in even degrees (hence it is formal), and by \cite[Lemma 4.5]{TouzeTroesch} there is an isomorphism:
$$\hom_{\PP_{dp}}(((\Gamma^{d})^{\hom_\Bbbk(X^{(1)},Y^{(1)})})^{(1)}, T)\simeq \hom_{\PP_{dp}}((\Gamma^{d})^{\hom_\Bbbk(X^{(1)},Y^{(1)})}, S^d_{E_r})\;.$$
By the Yoneda lemma, the latter is isomorphic to $S^d(E_r\otimes \hom_\Bbbk(X^{(1)},Y^{(1)}))$. This concludes the proof. \qed
\end{proof}

As a consequence of proposition \ref{prop-formal}, we obtain the following result, which concludes the third step of the proof of theorem \ref{thm-collapse}.
\begin{proposition}[Step 3]\label{prop-st3}
The first hypercohomology spectral sequence computing the homology of $\RR\hom_{\PP^{pd}_{pd}}(\LL\ell(\Gamma^{pd}\gl)^{(1)},B^{(1)})$ stops at the second page. 
\end{proposition}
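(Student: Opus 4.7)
The plan is to deduce the degeneration directly from the formality statement of Proposition \ref{prop-formal}. The essential point is that once $\LL\ell(\Gamma^{pd}\gl)^{(1)}$ is known to be formal, its image under $\RR\hom(-,B^{(1)})$ splits in the derived category as a direct sum of shifts of ordinary $\Ext$-groups, and this splitting already exhausts the dimension on the $E_2$-page.

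Concretely, I would invoke Proposition \ref{prop-formal} to identify $\LL\ell(\Gamma^{pd}\gl)^{(1)}$ with $A:=((\Gamma^d\gl)_{E_r})^{(1)}$ in $\DD^-(\PP^{pd}_{pd})$. The complex $A$ has zero differential and is concentrated in even internal degrees, so it splits in the derived category as the direct sum of its internal-degree components, each placed in the corresponding cohomological degree. Applying the triangulated functor $\RR\hom_{\PP^{pd}_{pd}}(-,B^{(1)})$ to this splitting yields
$$\RR\hom_{\PP^{pd}_{pd}}(\LL\ell(\Gamma^{pd}\gl)^{(1)},B^{(1)})\;\simeq\;\bigoplus_t \RR\hom_{\PP^{pd}_{pd}}(A^t,B^{(1)})[t]\;,$$
so that the total cohomology in degree $n$ is $\bigoplus_{s+t=n}\Ext^s_{\PP^{pd}_{pd}}(A^t,B^{(1)})$. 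Since $H_t$ of a complex with zero differential is just its degree $t$ component, this abutment is already equal, in each total degree $n$, to the $E_2$-page $''E_2^{s,t}=\Ext^s_{\PP^{pd}_{pd}}(A^t,B^{(1)})$. Finite-dimensionality then forces all higher differentials to vanish and the spectral sequence to degenerate at $E_2$.

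There is no genuine obstacle at this step: the substantive content has already been packaged in Proposition \ref{prop-formal}, whose proof rests on the Troesch resolution and the key lemmas of \cite{TouzeTroesch}. The present statement is really just the formal remark that a formal complex produces a degenerate first hypercohomology spectral sequence.
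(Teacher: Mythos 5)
Your argument is correct and takes essentially the same route as the paper: both deduce the degeneration entirely from the formality statement of Proposition \ref{prop-formal}. The only cosmetic difference is the final step — the paper notes directly that the hypercohomology spectral sequence of the formal complex $((\Gamma^{d}\gl)_{E_r})^{(1)}$ comes from a bicomplex whose first differential vanishes, whereas you split the abutment in the derived category and conclude by a dimension count using finite-dimensionality; both are valid.
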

\begin{proof} Proposition \ref{prop-formal} implies that
the two complexes 
$$\RR\hom_{\PP^{pd}_{pd}}(\LL\ell(\Gamma^{pd}\gl)^{(1)},B^{(1)})\;,\text{ and } \RR\hom_{\PP^{pd}_{pd}}((\Gamma^{pd}\gl_{E_r})^{(1)},B^{(1)})$$ 
are isomorphic in $\DD^+(\PP^{pd}_{pd})$ and they have isomorphic hypercohomology spectral sequences. The spectral sequence associated to the second one stops at the second page: it is the spectral sequence associated to a bicomplex whose first differential is zero. Whence the result. \qed
\end{proof}

As indicated in the plan of the proof in section \ref{subsec-collapse}, propositions \ref{prop-st1}, \ref{prop-st2} and \ref{prop-st3} together prove that theorem \ref{thm-collapse} holds for $r=1$. Hence by lemma \ref{lm-red}, it holds for any $r\ge 1$.

\section{Proof of theorem \ref{thm-univ}}\label{sec-proof}

In this section, we prove theorem \ref{thm-univ}, i.e. we build classes in the cohomology of $GL_{n,\Bbbk}$ with coefficients in $\Gamma^d(\mathfrak{gl}_n^{(1)})$, where $\mathfrak{gl}_n^{(1)}$ is the representation obtained by base change along the Frobenius morphism. We first transpose the problem in the framework of strict polynomial functors as in \cite[Section 1.2]{TouzeUniv}. Since the representation $\mathfrak{gl}_n$ is defined over $\Fp$, we have an isomorphism $\mathfrak{gl}_n^{(1)}\simeq \mathfrak{gl}_n^{[1]}$ \cite[I.9.10]{Jantzen}. In particular, the representation $\Gamma^d(\mathfrak{gl}_n^{(1)})$ is obtained by evaluating the bifunctor $(\Gamma^d\gl)^{(1)}$ on the pair $(\Bbbk^n,\Bbbk^n)$. Using the evaluation map 
$$H^*_{\PP}(B)\to H^*(GL_{n,\Bbbk},B(\Bbbk^n,\Bbbk^n))\,, $$
together with the fact that for $n\ge p$, it induces an isomorphism
$$H^2_{\PP}(\gl^{(1)})\xrightarrow[]{\simeq}H^2(GL_{n,\Bbbk},\mathfrak{gl}_n^{(1)})\;,$$
we easily see that theorem \ref{thm-univ} is implied by the following theorem. (The cup product on bifunctor cohomology appearing in theorem \ref{thm-bis} is defined in \cite[Section 1]{TouzeUniv} by taking tensor products of extensions and pulling back by the diagonal $\Gamma^{d+e}\gl\to (\Gamma^d\gl)\otimes (\Gamma^e\gl)$.)
\begin{theorem}\label{thm-bis}
Let $\Bbbk$ be a field of positive characteristic $p$. There are cohomology classes $c[d]\in H^{2d}_\PP((\Gamma^{d}\gl)^{(1)})$ satisfying the following conditions.
\begin{enumerate}
\item $c[1]$ is non zero.
\item If $d\ge 1$ and $\Delta_{(1,\dots,1)} : (\Gamma^{d}\gl)^{(1)}\to (\otimes^{d}\gl)^{(1)}$ is the inclusion, then $\Delta_{(1,\dots,1)\,*}c[d]=c[1]^{\cup d}$.
\end{enumerate}
\end{theorem}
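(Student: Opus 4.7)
The strategy is to use Theorem \ref{thm-collapse} to gain explicit access to $H^*_\PP((\Gamma^d\gl)^{(1)})$, identify $c[d]$ as the lift through the twisting spectral sequence filtration of a distinguished identity class, and verify the cup-product relation using naturality and multiplicativity. For the construction of $c[1]$: since $\gl = \hom_\Bbbk(I,I)$ is of separable type, the separable-type formula of section \ref{sec-fun-bifun} yields $H^*_\PP(\gl) = \Ext^*_{\PP_1}(I,I) = \Bbbk$ concentrated in degree zero; combined with the decomposition $\gl_{E_1} \simeq \bigoplus_{i=0}^{p-1} \gl[2i]$ from section \ref{sec-cohom-twisted}, Theorem \ref{thm-collapse} gives $H^2_\PP(\gl^{(1)}) \simeq H^0_\PP(\gl_{E_1}^2) = \Bbbk$ with no filtration ambiguity (the other $E_\infty^{s,t}$ with $s+t=2$ vanish because $(E_1)^1 = 0$ and $H^2_\PP(\gl) = 0$). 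I would take $c[1]$ to be the unique nonzero class; property~(1) is immediate.

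For $d \ge 2$, Theorem \ref{thm-collapse} equips $H^{2d}_\PP((\Gamma^d\gl)^{(1)})$ with a natural filtration whose top graded piece decomposes as
\[
E_\infty^{0,2d} = H^0_\PP\bigl((\Gamma^d\gl)_{E_1}^{2d}\bigr) = \bigoplus_{\substack{\sum d_i = d,\\ \sum i\,d_i = d}} \hom_{\PP^d_d}\bigl(\Gamma^d\gl,\ \Gamma^{d_0}\gl \otimes \cdots \otimes \Gamma^{d_{p-1}}\gl\bigr).
\]
The distinguished summand indexed by $(0,d,0,\dots,0)$ contains the identity $\mathrm{id}_{\Gamma^d\gl} \in \hom_{\PP^d_d}(\Gamma^d\gl, \Gamma^d\gl)$ as a canonical nonzero class. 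I would define $c[d]$ to be any lift of $\mathrm{id}_{\Gamma^d\gl}$ through the natural quotient $H^{2d}_\PP((\Gamma^d\gl)^{(1)}) \twoheadrightarrow E_\infty^{0,2d}$.

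To verify property~(2), I would examine both sides on $E_\infty^{0,2d}$ of the twisting spectral sequence for $(\gl^{\otimes d})^{(1)}$. By naturality of the twisting spectral sequence with respect to morphisms of bifunctors, $\Delta^{(1)}_* c[d]$ maps on $E_\infty^{0,2d}$ to the image of $\mathrm{id}_{\Gamma^d\gl}$ under $\Delta_*: H^0_\PP((\Gamma^d\gl)_{E_1}^{2d}) \to H^0_\PP((\gl^{\otimes d})_{E_1}^{2d})$; tracing through the $E_1$-parametrizations, this image lands in the $(1,1,\dots,1)$-indexed summand of the right-hand side as the diagonal inclusion $\Delta: \Gamma^d\gl \hookrightarrow \gl^{\otimes d}$. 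By multiplicativity of the twisting spectral sequence---a standard consequence of its bicomplex origin and of the compatibility of cup products with tensor products---the image of $c[1]^{\cup d}$ on the same $E_\infty$-piece is the $d$-fold cup product of the class $\mathrm{id}_\gl \in H^0_\PP(\gl)$, which again evaluates to $\Delta$ in the $(1,1,\dots,1)$-summand. Hence $\Delta^{(1)}_*c[d] - c[1]^{\cup d}$ lies in $F^1 H^{2d}_\PP((\gl^{\otimes d})^{(1)})$.

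The main obstacle will be upgrading this modulo-$F^1$ agreement to honest equality. The cleanest remedy is a chain-level refinement: factor $c[1]^{\cup d} = \Delta^*(c[1]^{\otimes d})$ through the cocommutative iterated comultiplication $\Gamma^{pd}\gl \to \Gamma^d(\Gamma^p\gl) \hookrightarrow (\Gamma^p\gl)^{\otimes d}$, and exploit the $\Si_d$-equivariance of $c[1]^{\otimes d}$ to show that its restriction to $\Gamma^d(\Gamma^p\gl)$ factors through $\Gamma^d(\gl^{(1)}) = (\Gamma^d\gl)^{(1)}$. The class so produced can then be taken as $c[d]$, making property~(2) tautological. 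Alternatively one sticks with the spectral-sequence approach and absorbs the $F^1$-discrepancy by adjusting $c[d]$ within its filtration-ambiguous coset, using surjectivity of $\Delta^{(1)}_*$ on the relevant lower filtered pieces.
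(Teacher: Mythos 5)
Your construction of $c[1]$ and your overall strategy of exploiting Theorem \ref{thm-collapse} match the paper, but the verification of property (2) has a genuine gap, which you partly acknowledge without closing. Two points. First, the ``$F^1$-discrepancy'' you worry about is automatically zero: $\otimes^d\gl$ is injective (it is a direct summand of a standard injective), so $H^{>0}_\PP((\otimes^d\gl)_{E_1})=0$ and hence $F^1H^{*}_\PP((\otimes^d\gl)^{(1)})=0$ --- the filtration on the target of $\Delta_{(1,\dots,1)\,*}$ is trivial. You never make this observation, and neither of your proposed remedies supplies it; the chain-level one in particular amounts to redoing the explicit cocycle construction of the original paper \cite{TouzeUniv} (note that producing a class in $\Ext^{2d}(\Gamma^{pd}\gl,\Gamma^d(\gl^{(1)}))$ from the $\Si_d$-equivariant class $c[1]^{\otimes d}$ is not an averaging operation in characteristic $p$ and is genuinely delicate). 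Second, and more seriously, even granting $F^1=0$, your comparison of $\Delta_{(1,\dots,1)\,*}c[d]$ and $c[1]^{\cup d}$ on $E_\infty^{0,2d}$ rests on a ``multiplicativity of the twisting spectral sequence'' that is nowhere established and is not routine: one must check that the identification $E_2\simeq H^*_\PP(B_{E_1})$ of Proposition \ref{prop-twss}, which is built from Troesch resolutions and the formality isomorphisms, is compatible with external products and with the comultiplication of $\Gamma^*\gl$. That is a substantive claim, not a formal consequence of ``bicomplex origin''.

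The paper sidesteps both issues. It first proves (Lemma \ref{lm-invariant}) that $c[1]^{\cup d}$ is $\Si_d$-invariant --- an elementary consequence of $\deg c[1]$ being even and $\Gamma^*\gl$ being cocommutative --- and then proves (Proposition \ref{prop-inv}) that $\Delta_{(1,\dots,1)\,*}$ surjects onto $H^{*}_\PP((\otimes^{d}\gl)^{(1)})^{\Si_d}$, using exactly the injectivity of $\otimes^d\gl$ together with the fact that $H^0_\PP$ commutes with taking invariants. One then defines $c[d]$ as \emph{any} preimage of $c[1]^{\cup d}$, so property (2) holds by construction; no identification of the image of $c[1]^{\cup d}$ inside $E_\infty^{0,2d}$, and hence no multiplicativity statement, is ever needed. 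If you wish to keep your definition of $c[d]$ as a lift of $\mathrm{id}_{\Gamma^d\gl}$, you must prove the multiplicativity; otherwise the invariance-plus-surjectivity route is the one that closes the argument with the tools actually available in the paper.
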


So we are left with the problem of finding classes $c[d]\in H^{2d}_\PP((\Gamma^{d}\gl)^{(1)})$. Finding $c[1]$ is not a problem. 
Indeed, it is well-known that $H^{2}_\PP(\gl^{(1)})$ is one dimensional (this is equivalent to the fact that $\Ext^2_{\PP_p}(I^{(1)},I^{(1)})$ is one dimensional, which is known since the seminal article \cite{FS}). So we choose for $c[1]$ a non zero cohomology class in $H^{2}_\PP(\gl^{(1)})$.

Now we want to find the classes $c[d]$ for $d\ge 2$. The action of the symmetric group $\Si_d$ on $\otimes^d$ (by permuting the factors of the tensor product) induce an action on the graded vector space $H^*_\PP((\otimes^{d}\gl)^{(1)})$.
\begin{lemma}\label{lm-invariant}
For all $d\ge 2$ the cup product $c[1]^{\cup d}\in H^{2d}_\PP((\otimes^{d}\gl)^{(1)})$ is invariant under the action of the symmetric group.
\end{lemma}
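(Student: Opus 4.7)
The plan is to combine Koszul graded commutativity of the tensor product of extensions with the cocommutativity of the divided power coalgebra. Recall from \cite[Section 1]{TouzeUniv} that $c[1]^{\cup d}$ is constructed by first taking the external tensor product
$$c[1]^{\otimes d}\in \Ext^{2d}_{\PP^{pd}_{pd}}\bigl((\Gamma^p\gl)^{\otimes d},\,(\gl^{(1)})^{\otimes d}\bigr),$$
and then pulling back along the diagonal $\Delta:\Gamma^{pd}\gl\to (\Gamma^p\gl)^{\otimes d}$. The symmetric group $\Si_d$ acts on the target $(\gl^{(1)})^{\otimes d}$ by permuting factors, and the task is to show that this action fixes $c[1]^{\cup d}$.

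The first step is to check that $\Delta$ is $\Si_d$-equivariant, with trivial action on the source. This is immediate from the cocommutativity of the divided power coalgebra $\bigoplus_n\Gamma^n$ applied to the bifunctor $\gl$; in other words, $\sigma\circ\Delta=\Delta$ for every $\sigma\in\Si_d$.

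The second step is to invoke the graded commutativity of the tensor product of extensions: for two classes $x\in \Ext^a(A,B)$ and $y\in \Ext^{a'}(A',B')$, the swap of the targets postcomposed with $x\otimes y$ equals $(-1)^{aa'}$ times $(y\otimes x)$ precomposed with the swap of the sources. Since $|c[1]|=2$ is even, all Koszul signs are $+1$ when we permute the $d$ copies of $c[1]$, so $\sigma\circ c[1]^{\otimes d}=c[1]^{\otimes d}\circ\sigma$ for every $\sigma\in\Si_d$, where on both sides $\sigma$ denotes the permutation of tensor factors.

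Combining the two steps, for any $\sigma\in\Si_d$,
$$\sigma_*c[1]^{\cup d}=\sigma\circ c[1]^{\otimes d}\circ\Delta=c[1]^{\otimes d}\circ\sigma\circ\Delta=c[1]^{\otimes d}\circ\Delta=c[1]^{\cup d},$$
which gives the invariance. The only delicate point is ensuring that the Koszul sign conventions used in the construction of $c[1]^{\cup d}$ match those under which graded commutativity is stated; however, because $|c[1]|$ is even, every such sign is trivial and the argument goes through without further bookkeeping.
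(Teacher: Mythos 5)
Your proof is correct and is essentially the paper's own argument, just with the details spelled out: the paper's proof likewise reduces the claim to the cocommutativity of the coalgebra $\Gamma^{*}\gl$ (giving $\Si_d$-equivariance of the diagonal) together with the evenness of $|c[1]|$ (killing the Koszul signs when permuting the external factors). No gaps.
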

\begin{proof}
Recall that the cup product:
$$H^i_\PP(B)\otimes H^j_\PP(B')\xrightarrow[]{\cup}H^{i+j}(B\otimes B')$$
is defined as the composite of the external product of extensions
$$\Ext^i_{\PP^d_d}(\Gamma^d\gl,B)\otimes \Ext^j_{\PP^e_e}(\Gamma^d\gl,B)\to \Ext^{i+j}_{\PP^{d+e}_{d+e}}(\Gamma^d\gl\otimes\Gamma^e\gl, B\otimes B')$$
and the map induced by the comultiplication $\Gamma^{d+e}\gl\to \Gamma^{d}\gl\otimes\Gamma^e\gl$.
Since $c[1]$ is in even degree and $\Gamma^{*}\gl$ is a cocommutative coalgebra, one easily gets the result from the definition of the action of $\Si_d$ and the definition of the cup product. \qed
\end{proof}

In view of lemma \ref{lm-invariant}, to get a proof of theorem \ref{thm-bis}, it suffices to prove that all the classes of $H^{2d}_\PP((\otimes^{d}\gl)^{(1)})^{\Si_d}$ are obtained from classes of $H^{2d}_\PP((\Gamma^{d}\gl)^{(1)})$ through the map
$$\Delta_{(1,\dots,1)\,*}:H^{2d}_\PP((\Gamma^{d}\gl)^{(1)})\to H^{2d}_\PP((\otimes^{d}\gl)^{(1)})\;.$$ 
Thus, the following statement concludes the proof of theorem \ref{thm-bis} (hence the proof of theorem \ref{thm-univ}).
\begin{proposition}\label{prop-inv} Let $d\ge 2$. Then the map $\Delta_{(1,\dots,1)\,*}$ induces a surjection
$$H^{*}_\PP((\Gamma^{d}\gl)^{(1)})\twoheadrightarrow H^{*}_\PP((\otimes^{d}\gl)^{(1)})^{\Si_d}\;. $$
\end{proposition}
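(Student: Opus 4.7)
The plan is to apply Theorem \ref{thm-collapse} to both $\Gamma^d\gl$ and $\otimes^d\gl$, and to exploit injectivity of $\otimes^d\gl$ in $\PP^d_d$ (property (4) of section \ref{sec-fun-bifun}) to force the twisting spectral sequence of $(\otimes^d\gl)^{(1)}$ to degenerate onto the row $s=0$. Everything is then reduced to a direct computation with $\hom$ spaces.

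First I would note that $(\otimes^d\gl)_{E_1}$ is still injective in $\PP^d_d$: choosing a basis of $E_1$ splits it as a direct sum of $p^d$ graded copies of $\otimes^d\gl$, and each summand is injective. Hence $H^s_\PP((\otimes^d\gl)^t_{E_1})=0$ for $s>0$, so $E_2(\otimes^d\gl,1)$ is concentrated on the row $s=0$. The target filtration is thus trivial and Theorem \ref{thm-collapse} provides a canonical $\Si_d$-equivariant isomorphism
$$H^n_\PP((\otimes^d\gl)^{(1)})\;\simeq\;\hom_{\PP^d_d}(\Gamma^d\gl,(\otimes^d\gl)^n_{E_1})\;.$$
Taking $\Si_d$-invariants, using that $\Si_d$ acts trivially on $\Gamma^d\gl$ (so $\hom_{\PP^d_d}(\Gamma^d\gl,-)$ commutes with the invariants functor) together with the identity $((\otimes^d\gl)_{E_1})^{\Si_d}=(\Gamma^d\gl)_{E_1}$ (an instance of $\Gamma^d(U)=(U^{\otimes d})^{\Si_d}$ applied to $U=V^\vee\otimes E_1\otimes W$), produces the identification
$$H^n_\PP((\otimes^d\gl)^{(1)})^{\Si_d}\;\simeq\;\hom_{\PP^d_d}(\Gamma^d\gl,(\Gamma^d\gl)^n_{E_1})\;.$$

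The last step is to identify $\Delta_{(1,\dots,1)\,*}$ by naturality of the twisting spectral sequence in $B$ (Proposition \ref{prop-twss}). The map is compatible with the filtrations, and as the target filtration is trivial it annihilates $F^1 H^n_\PP((\Gamma^d\gl)^{(1)})$ and factors through the quotient onto $\Gr^0=H^0_\PP((\Gamma^d\gl)^n_{E_1})=\hom_{\PP^d_d}(\Gamma^d\gl,(\Gamma^d\gl)^n_{E_1})$. The resulting map into $\hom_{\PP^d_d}(\Gamma^d\gl,(\otimes^d\gl)^n_{E_1})$ is post-composition with the inclusion $\Delta_{E_1}\colon(\Gamma^d\gl)_{E_1}\hookrightarrow(\otimes^d\gl)_{E_1}$, whose image is precisely the subspace of $\Si_d$-invariants computed above; surjectivity of $\Delta_{(1,\dots,1)\,*}$ onto the invariants follows.

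I expect the only potentially delicate point to be the preservation of $\Si_d$-equivariance through Theorem \ref{thm-collapse}, which only guarantees naturality of the associated graded in general. Here, however, equivariance of the collapsed isomorphism for $(\otimes^d\gl)^{(1)}$ is automatic because the filtration on the target is concentrated in a single degree; once this is recorded, the remainder of the argument is formal bookkeeping with the filtration on the source.
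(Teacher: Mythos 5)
Your argument is correct and follows essentially the same route as the paper: injectivity of $\otimes^d\gl$ collapses its twisting spectral sequence onto the row $s=0$, the identification $((\otimes^d\gl)_{E_1})^{\Si_d}=(\Gamma^d\gl)_{E_1}$ combined with the fact that $H^0_\PP=\hom_{\PP^d_d}(\Gamma^d\gl,-)$ commutes with invariants identifies the target, and naturality of the filtration forces $\Delta_{(1,\dots,1)\,*}$ to factor through $\Gr^0$ and hit all of the invariants. The paper packages this as a commutative diagram rather than as a factorization through $\Gr^0$, but the ingredients and the logic are identical.
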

\begin{proof}
The image of the map $\Delta_{(1,\dots,1)\,*}$ is contained in $H^{*}_\PP((\otimes^{d}\gl)^{(1)})^{\Si_d}$, we have to show that these two graded vector spaces are equal.
By theorem \ref{thm-collapse}, we know for any bifunctor $B$ in $\PP^d_d$ a natural isomorphism
$$H^s_\PP(B_{E_r}^t)=E_\infty^{s,t}(B,1) \simeq \Gr^s H^{s+t}_\PP(B^{(1)})\;. $$
Now there is a surjective map $H^*_\PP(B^{(1)})=F^0H^{*}_\PP(B^{(1)})\twoheadrightarrow \Gr^0 H^{*}_\PP(B^{(1)})$.
In particular, we have a commutative diagram (the composites of the horizontal are the edge maps of the twisting spectral sequence)
$$\xymatrix{
H^0_\PP((\Gamma^d\gl)_{E_r} )\ar[d]^-{(2)}\ar[r]^-{\simeq}& \Gr^0 H^{*}_\PP((\Gamma^d\gl)^{(1)})\ar[d] &\ar@{->>}[l] H^{*}_\PP((\Gamma^d\gl)^{(1)})\ar[d]^-{(3)}\\ 
H^0_\PP((\otimes^d\gl)_{E_r} )^{\Si_d}\ar[r]^-{\simeq}& \Gr^0 H^{*}_\PP((\otimes^d\gl)^{(1)})^{\Si_d}&\ar[l]^-{(1)}  H^{*}_\PP((\otimes^d\gl)^{(1)})^{\Si_d} 
}.
$$
The bifunctor $\otimes^d\gl$ is injective, so we have $H^*_\PP((\otimes^d\gl)_{E_r} )=H^0_\PP((\otimes^d\gl)_{E_r} )$. Thus, the bottom right morphism $(1)$ is actually an isomorphism. The vertical map on the left $(2)$  is also an isomorphism. Indeed, $H^0_\PP(-)$ and the operation of taking invariants under the natural action of a group on a bifunctor commute up to a canonical isomorphism. Hence the vertical map on the right $(3)$ is surjective. This concludes the proof. \qed
\end{proof}

\begin{remark}
We don't know if the classes $c[d]$ are uniquely determined by the choice of $c[1]$. 
Non uniqueness is measured by the kernel of the map 
$$\Delta_{(1,\dots,1)\,*}:H^{*}_\PP((\Gamma^{d}\gl)^{(1)})\to H^{*}_\PP((\otimes^{d}\gl)^{(1)})\;.$$
The proof of proposition \ref{prop-inv} shows that this kernel is equal to $F^1 H^{*}_\PP((\Gamma^{d}\gl)^{(1)})$, which is isomorphic to the cohomology $H^{>0}_\PP((\Gamma^{d}\gl)_{E_1})$. 

This cohomology is zero if $d<p$, hence the classes $c[d]$, $2\le d\le p-1$ are uniquely determined by $c[1]$.

This cohomology is non zero if $d\ge p$. 
Indeed, for $d=p$ we can compute it explicitly by using hypercohomology spectral sequences associated to the `norm complex' 
$0\to (S^{p}\gl)_{E_1} \to (\Gamma^{p}\gl)_{E_1}\to 0$ (the map in the middle is induced by the norm map $S^p\to \Gamma^p$, which generates the dimension one vector space $\hom_\mathcal{P}(S^p,\Gamma^p)$). We find that the bigraded Poincar\'e series of $H^{>0}_\PP((\Gamma^{p}\gl)_{E_1})=\bigoplus_{s\ge 1,t\ge 0}H^{s}_\PP((\Gamma^{p}\gl)^t_{E_1})$ are equal to 
$$s\left(\frac{1-s^{2p-2}}{1-s}\right)\left(\frac{1-t^{(2p)^2}}{1-t^{2p}}\right)\;. $$
Now assume that $d>p$. The cohomology $H^{>0}_\PP(\Gamma^{p}\gl)=\bigoplus_{s\ge 1}H^{s}_\PP((\Gamma^{p}\gl)^0_{E_1})$ is non zero. The graded functor $(\Gamma^{d}\gl)_{E_1}$ contains a copy of $\Gamma^p\gl\otimes \Gamma^{d-p}\gl$ as a direct summand of its part of degree $2$, and the functor homology cup product is injective \cite[Thm. 6.1]{TouzeClassical}. So we can build nontrivial cohomology classes in $H^{>0}_\PP((\Gamma^{d}\gl)_{E_1})$ a product of classes in $H_\PP^{>0}(\Gamma^{p}\gl)$ and classes in $H_\PP^0(\Gamma^{d-p}\gl)$.

However, for $d=p$, the bigraded Poincar\'e series of $H^{>0}_\PP((\Gamma^{d}\gl)_{E_1})$ are zero in total degree $2p$. Thus, although $H^{>0}_\PP((\Gamma^{d}\gl)_{E_1})$ is far from being zero, it brings no contribution to $F^1 H^{2p}_\PP((\Gamma^{d}\gl)^{(1)})$. Hence $c[p]$ is uniquely determined by $c[1]$. Since we don't know explicitly the Poincar\'e series of $H^{>0}_\PP((\Gamma^{d}\gl)_{E_1})$ for $d>p$, we don't know if such a providential uniqueness phenomenon occurs for the higher classes.
\end{remark}

\end{document}